\newcommand{\E}{\mathsf{E}}
\newcommand{\var}{\mathsf{Var}}
\newcommand{\cov}{\mathrm{cov}}
\newcommand{\pr}{\mathsf{P}}
\newcommand{\dd}{\mathrm{d}}
\newcommand{\veps}{\varepsilon}
\newcommand{\ipto}{\overset{\pr}{\to}}
\newcommand{\tsp}{\mathsf{T}}
\newcommand{\rN}{\mathcal{N}}
\newcommand{\emin}{\lambda_{\min}}
\theoremstyle{plain}%
\newtheorem{thm}{Theorem}[section]
\newtheorem{lem}[thm]{Lemma}
\newtheorem{prop}[thm]{Proposition}
\theoremstyle{definition}
\newtheorem{defn}{Definition}[section]
\newtheorem{assmp}{Assumption}
\theoremstyle{remark}
\begin{document}

\title{Consistent Maximum Likelihood Estimation Using Subsets with Applications to Multivariate Mixed Models}
\author{Karl Oskar Ekvall\thanks{ekvall@umn.edu}~}
\author{Galin L. Jones\thanks{galin@umn.edu}}
\affil{University of Minnesota}

\maketitle
\abstract{\noindent We present new results for consistency of maximum likelihood estimators with a focus on multivariate mixed models. Our theory builds on the idea of using subsets of the full data to establish consistency of estimators based on the full data. It requires neither that the data consist of independent observations, nor that the observations can be modeled as a stationary stochastic process. Compared to existing asymptotic theory using the idea of subsets we substantially weaken the assumptions, bringing them closer to what suffices in classical settings. We apply our theory in two multivariate mixed models for which it was unknown whether maximum likelihood estimators are consistent. The models we consider have non-stochastic predictors and multivariate responses which are possibly mixed-type (some discrete and some continuous).
}
\newpage
\section{Introduction} \label{sec:intro}
Mixed models are frequently used in applications and have been the subject of numerous articles and books \citep{Demidenko2013, Jiang2007, McCullochSearle2008}. Yet, it was unknown until recently whether maximum likelihood estimators (MLEs) are consistent even in some simple generalized linear mixed models (GLMMs) \citep{Jiang2017}. What complicates proving consistency in some mixed models is the dependence among response variables induced by certain random effect designs. Of course, not all types of dependence between responses are problematic -- there is a vast literature on maximum likelihood estimation with dependent observations \citep{Bar-Shalom1971, Crowder1976, Hall1980, Heijmans1986, Silvey1961, Wald1948, Weiss1971}. But, as we will discuss in more detail below, for some commonly used random effect designs such as those with crossed random effects, existing conditions for consistency of MLEs are hard to verify \citep{Jiang2017}. In a few GLMMs with crossed random effects, consistency has been proved using a novel argument that relates the likelihood for the full data to that of a subset consisting of independent and identically distributed (i.i.d.) random variables, ``the subset argument'' \citep{Jiang2013}.

Fundamentally, however, the issue is not unique to GLMMs or even mixed models; any other parametric model appropriate for the same settings may present similar difficulties. Accordingly, it was recognized in the first work on consistency using subsets that the idea has the potential to be extended to more general models \citep{Jiang2013}. We address this by establishing weaker conditions, based in part on the use of subsets, that are sufficient for consistency of MLEs, without assuming a particular model. They help explain formally what makes the subset argument work, why it is useful in some settings where more classical ones are not, and when it can fail. We illustrate the usefulness of our conditions by proving consistency of MLEs in two multivariate GLMMs (MGLMMs) to which existing theory has not been applied successfully.

To fix ideas, let $\Theta$ denote a parameter set, $f^n_\theta$ a joint density for the random vector $Y = (Y_1, \dots, Y_n)$, and $\theta^0$ the ``true'' parameter. Let also $L_n(\theta; Y) = f^n_\theta(Y) / f^n_{\theta^0}(Y)$ and $\Lambda_n(\theta; Y) = \log L_n(\theta; Y)$. If $\Theta$ is a finite set, then since $L_n(\theta^0; Y) = 1$, a necessary and sufficient condition for consistency of MLEs is that, as $n \to \infty$,
\begin{align} \label{eq:p_lik_zero}
 \pr(L_n(\theta; Y) \geq 1) \to 0 \quad \text{for all}\enskip \theta \neq \theta^0.
\end{align}
When $\Theta$ is not a finite set, \eqref{eq:p_lik_zero} needs to be amended by a uniformity argument to be sufficient, but the main ideas are the same. There are many ways to establish \eqref{eq:p_lik_zero}. With i.i.d. observations and regularity conditions, \eqref{eq:p_lik_zero} or stronger results follow from the law of large numbers applied to \(n^{-1}\Lambda_n(\theta; Y)\) \citep{Cramer1946,Doob1934,Ferguson1996,Wald1949}. If $Y$ is a more general stochastic process, $\Lambda_n(\theta; Y)$ may still, suitably scaled, satisfy an ergodic theorem, leading again to \eqref{eq:p_lik_zero} under regularity conditions. In the literature on maximum likelihood estimation with dependent observations, it is often assumed that some such limit law holds, either for $\Lambda_n(\theta; Y)$ or its derivatives \citep{Crowder1976,Hall1980,Heijmans1986}, or that the moments of $\Lambda_n(\theta; Y)$ converge in an appropriate way \citep{Bar-Shalom1971,Silvey1961}. Unfortunately, in many practically relevant settings, it is not clear that any such convergence holds and proving that it does is arguably the main obstacle to establishing consistency of MLEs. Let us illustrate using an MGLMM, commonly considered both in statistics and applied sciences \citep{ChenWehrly2014, CoullAgresti2000, GueorguievaAgresti2001, McCulloch2008,Warton2015}.

Let \(X = {[x_1, \dots, x_n]}^\tsp \in \mathbb R^{n \times p}\) be a matrix of non-stochastic predictors, \(Z = {[z_1, \dots, z_n]}^\tsp \in \mathbb R^{n \times r}\) a non-stochastic design matrix, and \(U \in \mathbb R^r\) a multivariate normal vector of random effects, with mean zero and covariance matrix \(\Sigma\). For the MGLMM, \( \Theta \subseteq \mathbb R^d \), for some \(d \geq 1\), \( \beta = \beta(\theta) \), and \(\Sigma = \Sigma(\theta)\). The responses \(Y_1, \dots, Y_n\) are conditionally independent given \(U\), with conditional exponential family densities
\begin{align*}
 f_{\theta, i}(y_i \mid u) = k_i(y_i, \tau_i)\exp\left(\frac{y_i [x_i^\tsp \beta + z_i^\tsp u] - c_i(x_i^\tsp \beta + z_i^\tsp u)}{\tau_i}\right),
\end{align*}
where, for $i = 1, \dots, n$, $c_i$ is the conditional cumulant function, $\tau_i$ a dispersion parameter, and $k_i(y_i, \tau_i)$ ensures $f_{\theta, i}(y_i\mid u)$ integrates to one. Conditional independence implies $f_\theta^n(y\mid u) = \prod_{i = 1}^n f_{\theta, i}(y_i\mid u)$. Several of the responses could be from the same subject, hence the ``multivariate'', and they can be of mixed type, some continuous and some discrete, for example.

The dependence among the linear predictors is easily characterized since $X\beta + ZU \sim \rN(X\beta, Z \Sigma Z^\tsp)$. The relevant density for maximum likelihood estimation, however, is the marginal density,
\begin{align} \label{eq:joint_dens}
 f_\theta^n(y) = \int_{\mathbb R^r} f_\theta^n(y \mid u)\phi^r_\theta(u)\dd u,
\end{align}
where $\phi_\theta^r$ denotes the $r$-dimensional multivariate normal density with mean zero and covariance matrix $\Sigma = \Sigma(\theta)$. The density $f^n_\theta(y)$ typically does not admit a closed form expression. Moreover, the dependence among responses it implies is in general less transparent than that among the linear predictors. What we can say in general is that two responses are dependent only if their corresponding linear predictors are. That is, response component $i$ and $j$ are independent if $z_i^\tsp \Sigma z_j = 0$.

It is convenient if $Z \Sigma Z^\tsp$ is, upon possible reordering of the responses, block diagonal since in that case the full vector of responses can be partitioned into independent sub-vectors. If these are of fixed length as $n$ grows then one is back in the classical setting where the full data consists only of an increasing number of independent vectors. This setting is common to many articles on asymptotic theory in mixed models \citep{Guven1995,Min2007,Nie2006,Sung2007}. Unfortunately, in applications the number of independent response vectors -- the number of diagonal blocks in $Z\Sigma Z^\tsp$ -- is often small. For example, Sung and Geyer \citep{Sung2007} note that in the famous salamander data \citep{McCullagh1989} there are 3 independent vectors, each of length 120. Thus, in their notation there are $n = 3$ independent observations, but in our notation there are $n = 3 \times 120 = 360$ possibly dependent observations. It seems more reasonable, then, {to consider large sample properties that do not assume the response vector $Y$ consists only of a large number of independent sub-vectors}. The type of limiting process we consider has, in the context of mixed models, previously only been investigated carefully in special cases that do not allow for predictors or  mixed-type responses \citep{Jiang2013,Miller1977}. To be sure, Jiang's \citep{Jiang2013} general theory does allow for predictors, but the specific applications do not. {Due to the inclusion of non-stochastic predictors, uniform convergence results needed for our theory, which in simpler models can be established using classical laws of large numbers, are in one of our applications verified using empirical process theory.}

The intuition behind the usefulness of the subset argument can be understood by considering the following simple LMM with crossed random effects. Suppose $Y_{i, j} = \theta + U^{(1)}_i + U^{(2)}_j + E_{i, j}$, where $U^{(1)}_i$, $U^{(2)}_j$, and $E_{i, j}$ are all i.i.d. standard normal, $i = 1, \dots, N$, $j = 1, \dots, N$. It is easy to check that the $Y_{i, j}$s cannot be partitioned into independent subsets. However, there are many subsets that, even though there is dependence among them, consist of independent random variables. For example, the two subsets $(Y_{1, 1}, Y_{2, 2}, \dots, Y_{N, N})$ and $(Y_{1, 2}, Y_{2, 3}, \dots, Y_{N - 1, N})$ are dependent, but taken separately they both consist of i.i.d. random variables. The MLE of $\theta$ based on either subset, i.e. a subset sample mean, is consistent as $N \to \infty$. Intuitively, then, the MLE based on all of the $N^2$ variables should be too. Of course, the subset argument is not needed to prove that in this simple example, but the intuition is the same for models where a direct proof is harder. How to formalize this intuition in more general models, without actually having to require the subset components to be either independent or identically distributed, is the topic of Section \ref{sec:consist}.

After developing a general theory, we will return to mixed models in Section \ref{sec:appl} and prove consistency of MLEs in two MGLMMs. The first is a longitudinal linear mixed model with autoregressive temporal dependence and crossed random effects. In this model, the integral in \eqref{eq:joint_dens} has a closed form solution which makes it easier to demonstrate some of the intricacies of the subset argument. For the second MGLMM we consider, which includes both binary and continuous responses, $f_\theta^n(y)$ does not admit a closed form expression. The subset argument is especially useful in this setting since the considered subsets have likelihoods that are more amenable to analysis than that of the full data.

The rest of the paper is organized as follows. We develop theory using subsets in Section \ref{sec:consist}. In Section \ref{sec:appl} we apply the theory from Section \ref{sec:consist} to two MGLMMs. Section \ref{sec:disc} contains a brief discussion of our results. Many technical details are deferred to the Appendix and the supplementary material \citep{Ekvall2019_suppl}.

\section{Consistency using subsets of the full data}\label{sec:consist}
Recall that $Y = (Y_1, \dots, Y_n)$ denotes a collection of random variables and let \(W = (W_1,\allowbreak \dots, W_m)\) be a collection of random variables that form a subset of those in $Y$, i.e. $\{W_1, \dots, W_m\} \subseteq \{Y_1, \dots, Y_n\}$. We will henceforth call $W$ a subcollection of $Y$ to avoid confusion with other subsets introduced later. The main results in this section give conditions for when subcollections can be used to prove consistency of maximizers of $L_n(\theta; Y)$. Unless otherwise noted, all convergence statements are as $n$ tends to infinity and the number of elements in a subcollection, $m = m(n)$, tends to infinity as a function of $n$.

All discussed random variables are defined on an underlying probability space $(\Omega, \mathcal F, \pr)$, with the elements of $\Omega$ denoted $\omega$. The parameter set $\Theta$ is assumed to be a subset of a metric space $(\mathcal T, d_{\mathcal T})$. We write, for any $t \in \mathcal T$ and $\delta > 0$, $B_\delta(t) = \{t' \in \mathcal T: d_\mathcal T(t, t') < \delta\}$. For any $A \subseteq \mathcal T$, $\bar{A}$ denotes its closure and $\partial A$ its boundary. We assume the true parameter $\theta^0$ is the same for all $n$ but the joint density $f_\theta^n(y)$ of $Y$, against a dominating, $\sigma$-finite product measure $\nu = \nu_n$, can depend on $n$ in an arbitrary manner. In particular, our setting allows for a triangular array of responses, $Y_{n, 1}, \dots, Y_{n, n}$, though for convenience we do not make this explicit in the notation.

By $\theta^0$ being the true parameter we mean that $\pr(Y \in A) = \int_A f^n_{\theta^0}(y)\nu(\dd y)$ for any measurable $A$ in the range space of $Y$. That is, expectations and probabilities with respect to $\pr$ are the same as those taken with respect to distributions indexed by $\theta^0$. Densities for the subcollection and its components are denoted by $g$ in place of $f$; for example, $L_m(\theta; W) = g_\theta^m(W) / g_{\theta^0}^m(W)$.

We will establish the following sufficient condition for consistency of maximizers of $L_n(\theta; Y)$:
\begin{align} \label{eq:p_lik_zero_unif}
 \pr\left(\sup_{\theta \in \Theta \cap B_\veps(\theta^0)^c} L_n(\theta; Y) \geq 1\right) \to 0,  ~~~~ \forall ~\veps >0\, .
\end{align}
{That is, the probability that there exists a maximizer of the likelihood outside an arbitrarily small ball around the true parameter tends to zero. We now discuss the use of subcollections and the assumptions used to achieve \eqref{eq:p_lik_zero_unif}, which eventually leads to the main results in Theorems \ref{thm:wald} and \ref{thm:cramer} presented at the end of the section.}

The appeal of using subcollections to prove \eqref{eq:p_lik_zero_unif}, instead of directly working with the full data likelihood $L_n(\theta; Y)$, can be explained using the following lemma.
\begin{lem} \label{lem:subset}
 For every $c \in (0, \infty)$, $\theta \in \Theta$, and subcollection $W$, $\pr$-almost surely,
 \[
  \pr \left( L_n(\theta; Y) \geq c \mid W\right) \leq  c^{-1} L_m(\theta; W).
 \]
\end{lem}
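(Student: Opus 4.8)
The plan is to combine a conditional version of Markov's inequality with an exact computation of the conditional expectation of the full-data likelihood ratio given the subcollection. Since $L_n(\theta; Y) = f^n_\theta(Y)/f^n_{\theta^0}(Y) \geq 0$, the conditional Markov inequality applied to the sub-$\sigma$-algebra $\sigma(W)$ generated by $W$ gives, $\pr$-almost surely,
\[
 \pr\left(L_n(\theta; Y) \geq c \mid W\right) \leq c^{-1}\,\E\left[L_n(\theta; Y) \mid W\right].
\]
This follows from the pointwise bound $\mathbf{1}\{L_n(\theta; Y) \geq c\} \leq c^{-1} L_n(\theta; Y)$ upon taking conditional expectations, and holds for any nonnegative integrable random variable. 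So the entire statement reduces to identifying the conditional expectation on the right.

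The substance of the argument is then to show that $\E[L_n(\theta; Y) \mid W] = L_m(\theta; W)$ almost surely. To this end I would, after relabeling indices if necessary, write $Y = (W, V)$, where $V$ collects the components of $Y$ not belonging to $W$, and factor the dominating product measure as $\nu = \nu_W \otimes \nu_V$; the product structure of $\nu = \nu_n$ is precisely what makes this factorization available. Under $\theta^0$ the conditional density of $V$ given $W$ is $f^n_{\theta^0}(W, V)/g^m_{\theta^0}(W)$, where $g^m_{\theta^0}(W) = \int f^n_{\theta^0}(W, v)\,\nu_V(\dd v)$ is the marginal density of $W$. Because expectations under $\pr$ are taken at $\theta^0$, computing the conditional expectation by integrating against this conditional density, cancelling the $f^n_{\theta^0}(W,v)$ factors, and recognizing $\int f^n_\theta(W, v)\,\nu_V(\dd v) = g^m_\theta(W)$ yields
\[
 \E\left[L_n(\theta; Y) \mid W\right] = \int \frac{f^n_\theta(W, v)}{f^n_{\theta^0}(W, v)}\,\frac{f^n_{\theta^0}(W, v)}{g^m_{\theta^0}(W)}\,\nu_V(\dd v) = \frac{g^m_\theta(W)}{g^m_{\theta^0}(W)} = L_m(\theta; W).
\]
Chaining this identity with the conditional Markov bound gives the claimed inequality.

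The main obstacle is measure-theoretic bookkeeping rather than any deep idea. One must ensure the likelihood ratios are well defined off a null set: the events $\{f^n_{\theta^0}(Y) = 0\}$ and $\{g^m_{\theta^0}(W) = 0\}$ have probability zero under $\theta^0$, which legitimizes the cancellation above almost surely. One must also justify that the conditional density of $V$ given $W$ has exactly the ratio form used, which is where the product structure $\nu = \nu_W \otimes \nu_V$ and Fubini--Tonelli enter, and verify that the relabeling identifying $Y$ with $(W, V)$ leaves the densities and the measure unchanged. Since these steps rely only on the definition of conditional expectation via densities together with Fubini--Tonelli, I expect the argument to go through cleanly once the factorization of $\nu$ is in place.
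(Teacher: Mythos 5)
Your overall route---conditional Markov's inequality applied to the nonnegative variable $L_n(\theta;Y)$, followed by a computation of $\E[L_n(\theta;Y)\mid W]$ using the split $Y=(W,V)$ and the product structure $\nu=\nu_W\otimes\nu_V$---is exactly the standard argument; the paper defers this proof to its supplement precisely because it is well known (it goes back to Jiang's subset argument), and that proof proceeds the same way. So in structure you have reproduced the intended proof.

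One step, however, is stated too strongly: the identity $\E[L_n(\theta;Y)\mid W]=L_m(\theta;W)$ is false in general. The cancellation of $f^n_{\theta^0}(W,v)$ inside the $v$-integral is legitimate only on the section $\{v: f^n_{\theta^0}(W,v)>0\}$; on its complement the conditional density of $V$ given $W$ vanishes, so that region contributes nothing to the conditional expectation, even though it may carry positive $f^n_\theta$-mass. What the computation actually yields is
\[
\E\left[L_n(\theta;Y)\mid W\right]
=\frac{1}{g^m_{\theta^0}(W)}\int_{\{v:\,f^n_{\theta^0}(W,v)>0\}} f^n_\theta(W,v)\,\nu_V(\dd v)
\;\leq\;\frac{g^m_\theta(W)}{g^m_{\theta^0}(W)}=L_m(\theta;W),
\]
with equality only under an absolute-continuity condition (that $f^n_\theta$ vanish $\nu_V$-a.e.\ wherever $f^n_{\theta^0}$ does) which the paper nowhere assumes---densities indexed by different $\theta$ may have different supports. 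Your remark that $\{f^n_{\theta^0}(Y)=0\}$ and $\{g^m_{\theta^0}(W)=0\}$ are $\pr$-null takes care of the ratios being well defined, but not of this loss of mass inside the integral. The fix is costless: the lemma asserts only an inequality, and the chain ``conditional Markov, then $\E[L_n(\theta;Y)\mid W]\leq L_m(\theta;W)$'' delivers it verbatim; just weaken your middle ``$=$'' to ``$\leq$''.
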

Versions of Lemma \ref{lem:subset} are well known \citep{Jiang2013,Jiang2017}, but the supplementary material \citep{Ekvall2019_suppl} contains a proof for completeness. From the lemma it follows that if $L_m(\theta; W) \to 0$, then $\E[\pr(L_n(\theta; Y) \geq 1 \mid W)] = \pr(L_n(\theta; Y)\geq 1) \to 0$ by dominated convergence. That is, up to a uniformity argument, $\eqref{eq:p_lik_zero_unif}$ can be established by showing that the likelihood of the subcollection converges to zero in probability, outside of a neighborhood of $\theta^0$. Uniform versions of that convergence will play a crucial role in our results.

\begin{defn}\label{def:identify}
 We say that a subset $A \subseteq \Theta$ is identified by a subcollection $W$ if $\sup_{\theta \in A}L_m(\theta; W) \ipto 0$. If $\sup_{\theta \in A}L_m(\theta; W) = O_\pr(a_n)$ for some sequence of constants $\{a_n\}$, $n = 1, 2, \dots$, we call $a_n$ an identification rate.
\end{defn}

To understand this definition better, consider the case where the subcollection $W$ consists of $m$ i.i.d. random variables with common marginal density $g_{\theta, 1}$. Suppose also that there is no $\theta \in A$ for which $g_{\theta, 1} = g_{\theta^0, 1}$ $\nu$-almost everywhere. That is, $\theta^0$ is an identified parameter in the classical sense if we restrict attention to the parameter set $A \cup\{\theta^0\}$. Then, under regularity conditions \citep[Theorems 16 and 17]{Ferguson1996}, one has $\sup_{\theta \in A}\E[\Lambda_m(\theta; W)] < 0$ and, by a uniform strong law of large numbers,
\[
  \lim_{m \to \infty} m^{-1}\sup_{\theta \in A}\vert \Lambda_m(\theta; W) - \E[\Lambda_m(\theta; W)]\vert = 0.
\]
Using this, it is straightforward to show that $A$ is identified by $W$ with an identification rate that is exponentially fast in $m$. That is, with i.i.d. components and regularity conditions, the classical definition of an identified parameter implies identification in the sense of Definition \ref{def:identify}. However, we want to allow for subcollections that do not consist of i.i.d. components, and in that case the classical definition is not as useful. For example, we have independent but not identically distributed components in one of our MGLMMs. In this and more general cases, a parameter could be identified in the classical sense for all sample sizes $n$, but, loosely speaking, the difference between the distributions for $W$ indexed by some $\theta \in A$ and that indexed by $\theta^0$ could vanish asymptotically, preventing $W$ from identifying $A$ in our sense. Finally, notice also that $A$ being identified by $W$ is essentially equivalent to MLEs based on $W$ with the restricted parameter set $A\cup \{\theta^0\}$ being consistent.

We can now be more precise about how to use subcollections to establish \eqref{eq:p_lik_zero_unif}. The strategy is to first find a subcollection $W$ that identifies $B_\veps(\theta^0)^c\cap \Theta$ for every $\veps > 0$, and then use Lemma \ref{lem:subset} to get the convergence for the full likelihood in \eqref{eq:p_lik_zero_unif}. For this strategy to be useful, showing that $W$ identifies $B_\veps(\theta^0)^c\cap \Theta$ has to be easier than showing that $Y$ does since the latter would directly imply \eqref{eq:p_lik_zero_unif}. That is, one has to be able to pick out a subcollection with more convenient properties than the full data. Our applications in Section \ref{sec:appl} illustrate how this can be done.

It is useful to allow for several subcollections $W^{(i)}$, consisting of $m_i$ components, and subsets $A_i$, $i = 1, \dots, s$. By doing so, different subcollections can be used to identify different subsets of the parameter set. For example, if the parameter set is a product space, as is common in applications, then different subcollections can be used to, loosely speaking, identify different elements of the parameter vector. Assumption \ref{A:identify} makes precise what we need to identify $\Theta \cap B_\veps(\theta^0)^c$ using several subcollections.  \begin{assmp}\label{A:identify}
 For every small enough $\veps > 0$, there are subsets $A_i = A_i(\veps) \subseteq \Theta$ and corresponding subcollections $W^{(i)}$, $i = 1, \dots, s$, such that $\cup_{i = 1}^s A_i \supseteq \Theta \cap B_\veps(\theta^0)^c$ and each $A_i$ is identified by $W^{(i)}$ with some identification rate $a_{n,i}$, $n = 1, 2, \dots$,  $i = 1, \dots, s$.
\end{assmp}
This assumption is somewhat similar to assumptions A2 and A3 made by Jiang \citep{Jiang2013}, which are also assumptions about parameter identification using several subcollections. However, those assumptions are stated in terms of $\E (\Lambda_{m_i}(\theta; W^{(i)}))$ and $\var(\Lambda_{m_i}(\theta; W^{(i)})), i = 1, \dots, s$. The fact that we do not have to assume anything about the variances of the log-likelihood ratios is an important improvement. For example, if subcollection $i$ consists of i.i.d. components, the convergence of $m_i^{-1}\Lambda_{m_i}(\theta; W^{(i)})$ is immediate from the law of large numbers, but calculating its variance may be difficult.

For finite parameter sets, Assumption \eqref{A:identify} is enough to give consistency of MLEs via Lemma \ref{lem:subset}. For more general cases we also need to control the regularity of the log-likelihood for the full data. The following two assumptions are made to ensure that the uniformity of the convergence detailed in Assumption \ref{A:identify} and Definition \ref{def:identify} carries over to $\Lambda_n(\theta; Y)$, in the sense of \eqref{eq:p_lik_zero_unif}.

\begin{assmp}\label{A:Lipschitz}
 For every $i \in \{1, \dots, s\}$ and $n \in \{ 1, 2, \dots\}$, $\Lambda_n(\theta; Y)$ is $\pr$-almost surely Lipschitz continuous in $\theta$ on the $A_i$ defined in Assumption 1; that is, there exists a random variable $K_{n, i}$ not depending on $\theta$ such that, $\pr$-almost surely and for every $\theta, \theta' \in A_i$,
 \begin{align*}
  \vert \Lambda_n(\theta; Y) - \Lambda_n(\theta'; Y)\vert \leq K_{n, i} d_\mathcal T(\theta, \theta').
 \end{align*}
\end{assmp}

\begin{assmp} \label{A:rates}
 Each $A_i$ from Assumption \ref{A:identify} can be covered by $M_{n, i}$ balls of radius $\delta_{n, i}$ such that
 \[K_{n, i} \delta_{n, i} \ipto 0\text{ and } M_{n, i}a_{n, i} \to 0,\]
 where $a_{n, i}$ and $K_{n, i}$, $i = 1, \dots, s$, $n = 1, 2, \dots,$ are the same as in Assumptions $\ref{A:identify}$ and \ref{A:Lipschitz}, respectively.
\end{assmp}

There is an interplay between Assumption \ref{A:identify} -- \ref{A:rates} where the rates in Assumption \ref{A:identify} need to be sufficiently fast in comparison to the growth of the Lipschitz constants in Assumption \ref{A:Lipschitz}; Assumption \ref{A:rates} specifies how the rates should align. Why these rates work will be clear from the proof of Lemma \ref{lem:consist}, but the intuition is as follows. To get uniformity in $\theta$, we cover $A_1$ (say) with balls small enough that the likelihood is approximately constant on them, so that one can work pointwise in $\theta$ in each ball instead of uniformly. If the likelihood changes much on $A_1$ in the sense that $K_{n, 1}$ is large, then the radius $\delta_{n, 1}$ needs to be small; this is what the first rate condition says. The second rate condition illustrates that there is a price for picking small balls, namely that if many balls are needed to cover $A_1$, then fast identification rates are needed.

The assumptions give us the convergence in \eqref{eq:p_lik_zero_unif} and, consequently, the following lemma.
\begin{lem} \label{lem:consist}
 If Assumptions \ref{A:identify} -- \ref{A:rates} hold, then the probability that there exists a global maximizer of $\Lambda_n(\theta; Y)$ in $B_\veps(\theta^0)^c \cap \Theta$ tends to zero as $n \to \infty$, for every $\veps > 0$.
\end{lem}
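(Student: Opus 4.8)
The plan is to prove the uniform bound \eqref{eq:p_lik_zero_unif}, from which the lemma follows immediately: if a global maximizer $\hat\theta$ of $\Lambda_n(\cdot\,;Y)$ over $\Theta$ lies in $B_\veps(\theta^0)^c \cap \Theta$, then $L_n(\hat\theta; Y) \geq L_n(\theta^0; Y) = 1$, so the event that such a maximizer exists is contained in $\{\sup_{\theta \in \Theta \cap B_\veps(\theta^0)^c} L_n(\theta; Y) \geq 1\}$. By Assumption \ref{A:identify} we have $\Theta \cap B_\veps(\theta^0)^c \subseteq \cup_{i=1}^s A_i$ with $s$ finite, so a union bound reduces everything to showing $\pr(\sup_{\theta \in A_i} L_n(\theta; Y) \geq 1) \to 0$ for each fixed $i$.

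Fix $i$. The first step is discretization. Using the covering from Assumption \ref{A:rates}, I would select representative points $\theta_{i,1}, \dots, \theta_{i,M_{n,i}} \in A_i$ so that every $\theta \in A_i$ lies within $d_{\mathcal T}$-distance $\delta_{n,i}$ of some $\theta_{i,k}$ (after, if necessary, recentering the covering balls at nearby points of $A_i$ and enlarging the radius by a fixed factor, which does not affect the rate conditions). The Lipschitz bound of Assumption \ref{A:Lipschitz} then gives $\Lambda_n(\theta; Y) \leq \Lambda_n(\theta_{i,k}; Y) + K_{n,i}\delta_{n,i}$ for $\theta$ in the $k$th ball, and exponentiating yields
\[
 \sup_{\theta \in A_i} L_n(\theta; Y) \leq e^{K_{n,i}\delta_{n,i}} \max_{1 \leq k \leq M_{n,i}} L_n(\theta_{i,k}; Y).
\]
Since $K_{n,i}\delta_{n,i} \ipto 0$, for a fixed $\eta > 0$ the event $\{K_{n,i}\delta_{n,i} > \eta\}$ has vanishing probability, so it suffices to control $\pr(\max_k L_n(\theta_{i,k}; Y) \geq e^{-\eta})$.

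The second step passes from the full-data likelihood at these finitely many points to the subcollection likelihood via Lemma \ref{lem:subset}. Conditioning on $W^{(i)}$ and taking a union bound over $k$ with $c = e^{-\eta}$,
\[
 \pr\Bigl(\max_{k} L_n(\theta_{i,k}; Y) \geq e^{-\eta} \,\Big|\, W^{(i)}\Bigr) \leq e^{\eta} \sum_{k=1}^{M_{n,i}} L_{m_i}(\theta_{i,k}; W^{(i)}) \leq e^{\eta} M_{n,i} \sup_{\theta \in A_i} L_{m_i}(\theta; W^{(i)})
\]
$\pr$-almost surely. By Assumption \ref{A:identify} the supremum on the right is $O_\pr(a_{n,i})$, and Assumption \ref{A:rates} gives $M_{n,i} a_{n,i} \to 0$, so (as $M_{n,i}$ and $\delta_{n,i}$ are deterministic) the whole right-hand side is $o_\pr(1)$.

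Finally, the conditional probability on the left is a random variable bounded above by $1$ and by the $o_\pr(1)$ quantity just displayed; hence it tends to zero in probability while remaining uniformly bounded, and bounded convergence gives $\pr(\max_k L_n(\theta_{i,k}; Y) \geq e^{-\eta}) = \E[\pr(\cdots \mid W^{(i)})] \to 0$. Adding the two vanishing contributions and summing over the finitely many $i$ yields \eqref{eq:p_lik_zero_unif}. The crux, and where Assumptions \ref{A:Lipschitz}--\ref{A:rates} are used, is the interplay of the two rates: the discretization error $K_{n,i}\delta_{n,i}$ must vanish so the supremum collapses to a finite maximum, while simultaneously the covering number $M_{n,i}$ must not outgrow the identification rate $a_{n,i}$, since the union bound costs a factor $M_{n,i}$. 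The one genuinely delicate point is converting the conditional bound, which is only $O_\pr$ rather than a statement about expectations, into an unconditional limit; this is handled precisely because a conditional probability is automatically bounded by one, so bounded convergence applies.
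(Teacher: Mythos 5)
Your proposal is correct and takes essentially the same route as the paper's own proof: reduce to each $A_i$ via a union bound, discretize $A_i$ using the cover from Assumption \ref{A:rates}, use the Lipschitz bound of Assumption \ref{A:Lipschitz} to collapse the supremum to a maximum over the $M_{n,i}$ representative points, then condition on $W^{(i)}$, apply Lemma \ref{lem:subset} with subadditivity, and conclude by dominated (bounded) convergence since the conditional probability is bounded by one. The only differences are cosmetic --- your threshold $e^{-\eta}$ plays the role of the paper's $1/2$, and you handle the recentering of covering balls at points of $A_i$ slightly more explicitly than the paper does.
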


\begin{proof}[Proof]
  We give an outline here and a detailed proof in Appendix \ref{app:proofs}. Without loss of generality, we may assume $s = 1$, so there is one subcollection $W$ that identifies $A = \Theta \cap B_\veps(\theta^0)^c$, for arbitrary, small $\veps > 0$, with rate $a_n$. It suffices to prove that $\pr(\sup_{\theta \in A} L_n(\theta; Y) \geq 1)\to 0$. For $j = 1, \dots, M_{n}$ let $\theta^j$ be a point in the intersection of $A$ and the $j$th ball in the cover of $A$ given by Assumption \ref{A:rates}. Some algebra and Assumption \ref{A:Lipschitz} gives
 \begin{align*}
  \pr\left(\sup_{\theta \in A}L_n(\theta; Y)\geq 1 \right) \leq \pr\left(\max_{j \leq M_n} L_n(\theta^j; Y)\geq 1/2\right) + \pr\left(e^{K_n \delta_n }\geq 2 \right).
 \end{align*}
 The second term is $o(1)$ by Assumption \ref{A:rates}. It remains to deal with the first. By conditioning on the subcollection and using Lemma \ref{lem:subset} one gets
 \begin{align*}
  \pr\left(\max_{j \leq M_n} L_n(\theta^j; Y)\geq 1/2\mid W\right) & \leq 2 M_{n}\sup_{\theta \in A} L_m(\theta; W).
 \end{align*}
 The right hand side is $o_\pr(1)$ by Assumption \ref{A:rates}, so the expectation of the left hand side is $o(1)$ by dominated convergence, which finishes the proof.
\end{proof}

We will use Lemma \ref{lem:consist} to establish both a Wald-type consistency, meaning consistency of sequences of global maximizers of $L_n(\theta; Y)$, and a Cramér-type consistency, meaning consistency of a sequence of roots to the likelihood equations $\nabla \Lambda_n(\theta; Y) = 0$. It follows almost immediately from the lemma that if $L_n(\theta; Y)$ has a global maximizer $\hat{\theta}_n$, $\pr$-almost surely for every $n$, then $\hat{\theta}_n \ipto \theta^0$. In particular, if $\Theta$ is compact one gets Wald-type consistency with an additional continuity assumption. Since Assumption 2 implies $L_n(\theta; Y)$ is continuous at every point except possibly $\theta^0$, assuming continuity also at the unknown $\theta^0$ should be insignificant in any application of interest.
\begin{thm}\label{thm:wald}
 If $\Theta$ is compact, $L_n(\theta; Y)$ is $\pr$-almost surely continuous on $\Theta$ for every $n$, and Assumptions \ref{A:identify} -- \ref{A:rates} hold, then a maximizer $\hat{\theta}_n$ of $L_n(\theta; Y)$ exists $\pr$-almost surely for every $n$, and $\hat{\theta}_n \ipto \theta^0$ for any sequence of such maximizers.
\end{thm}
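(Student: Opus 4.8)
The plan is to treat the two assertions—existence of a maximizer and consistency of any sequence of maximizers—separately, deducing both quickly from a standard compactness argument and from Lemma \ref{lem:consist}.

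For existence I would invoke the extreme value theorem. For each fixed $n$, outside a $\pr$-null set the map $\theta \mapsto L_n(\theta; Y)$ is continuous on the compact set $\Theta$, so it attains its supremum; hence a global maximizer $\hat\theta_n$ exists $\pr$-almost surely. To make the statement $\hat\theta_n \ipto \theta^0$ meaningful I would record that under continuity and compactness a measurable selection of the argmax exists, by a standard measurable-selection argument, with $\hat\theta_n$ set to an arbitrary fixed point on the exceptional null set. Since the claim is about \emph{any} sequence of such maximizers, I would carry the argument for an arbitrary (measurable) selection rather than a particular one.

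For consistency I would reduce the claim directly to Lemma \ref{lem:consist}. Fix $\veps > 0$. Because $\Lambda_n = \log L_n$ and the logarithm is strictly increasing, and because $L_n(\theta^0; Y) = 1$ forces $L_n(\hat\theta_n; Y) \geq 1 > 0$, the point $\hat\theta_n$ is a global maximizer of $\Lambda_n(\cdot; Y)$ exactly when it is one of $L_n(\cdot; Y)$. As $\hat\theta_n \in \Theta$, the event $\{d_{\mathcal T}(\hat\theta_n, \theta^0) \geq \veps\} = \{\hat\theta_n \in B_\veps(\theta^0)^c \cap \Theta\}$ is contained in the event that there exists a global maximizer of $\Lambda_n(\cdot; Y)$ in $B_\veps(\theta^0)^c \cap \Theta$, the key point being that $\hat\theta_n$ is itself such a maximizer. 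Lemma \ref{lem:consist} says the probability of the latter event tends to zero, so
\[
 \pr\bigl(d_{\mathcal T}(\hat\theta_n, \theta^0) \geq \veps\bigr) \to 0.
\]
Since $\veps > 0$ was arbitrary this is precisely $\hat\theta_n \ipto \theta^0$, and as the bound is indifferent to the choice of selection it holds for every sequence of maximizers.

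Because the substantive content is already carried by Lemma \ref{lem:consist}, I do not expect a genuine obstacle; the theorem is essentially a corollary. The only points needing care are the event inclusion in the consistency step—ensuring that ``$\hat\theta_n$ lands outside the ball'' truly implies ``a maximizer exists outside the ball,'' which is immediate because $\hat\theta_n$ is that maximizer—and the measurability of the maximizer sequence, which I would dispatch with the measurable-selection remark above, or, should that be undesirable, by passing to outer probability, under which the identical event inclusion and Lemma \ref{lem:consist} yield convergence in outer probability.
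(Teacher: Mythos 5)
Your proposal is correct and follows essentially the same route as the paper: existence via the extreme value theorem on the compact $\Theta$ with $\pr$-almost sure continuity, and consistency via the event inclusion $\{d_{\mathcal T}(\hat\theta_n,\theta^0)\geq\veps\}\subseteq\{\text{a global maximizer lies in } B_\veps(\theta^0)^c\cap\Theta\}$ combined with Lemma \ref{lem:consist}. Your additional remarks on measurable selection and outer probability are careful touches the paper leaves implicit, but they do not change the argument.
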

\begin{proof}
 Since continuous functions attain their suprema on compact sets, $L_n(\theta; Y)$ has a maximizer on $\Theta$, $\pr$-almost surely. By Lemma \ref{lem:consist} all maximizers are in $B_\veps(\theta^0)$ with probability tending to one, for all small enough $\veps > 0$.
\end{proof}
Though compactness is a common assumption \citep{Heijmans1986,White1982}, it is sometimes too restrictive or even unnecessary. If $L_n(\theta; Y)$, or more commonly $\Lambda_n(\theta; Y)$, is strictly concave in $\theta$ on a convex $\Theta$, then it is enough to verify the assumptions on a neighborhood of $\theta^0$ (c.f. Theorem \ref{thm:cramer}) to get consistency of the unique global maximizer. However, a global maximizer need not exist even as $n \to \infty$, or perhaps the assumptions cannot be verified for other reasons. With a few additional assumptions, Lemma \ref{lem:consist} can then be used to get the weaker Cramér-type consistency, which also only requires verifying assumptions for neighborhoods of $\theta^0$.
\begin{thm}\label{thm:cramer}
 If $\Theta \subseteq \mathbb R^d$ for some $d\geq 1$, $L_n(\theta; Y)$ is almost surely differentiable in $\theta$ on a neighborhood of an interior $\theta^0$ for every $n$, and Assumptions \ref{A:identify} -- \ref{A:rates} hold with $\Theta$ replaced by $\bar{B}_\veps(\theta^0)$ for all small enough $\veps > 0$, then, with probability tending to one as $n \to \infty$, there exists a local maximizer of $L_n(\theta; Y)$, and hence a root to the likelihood equation $\nabla \Lambda_n(\theta; Y) = 0$, in $B_\veps(\theta^0)$, for all small enough $\veps > 0$.
\end{thm}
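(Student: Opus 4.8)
The plan is to follow the classical Cramér strategy of exhibiting a local maximizer in the interior of a small ball around $\theta^0$, using Lemma \ref{lem:consist} to guarantee that the maximizer of the restricted likelihood cannot escape to the boundary. The key observation is that $L_n(\theta^0; Y) = f^n_{\theta^0}(Y)/f^n_{\theta^0}(Y) = 1$ $\pr$-almost surely, so $\theta^0$ provides a baseline value of one at the center of the ball; if the likelihood is below one everywhere near the boundary, the maximum over the closed ball must be attained in the interior, where differentiability forces the gradient to vanish.

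Concretely, I would fix an $\veps > 0$ small enough that $\bar{B}_\veps(\theta^0)$ is contained in the neighborhood of $\theta^0$ on which $L_n(\theta; Y)$ is $\pr$-almost surely differentiable, and small enough that Assumptions \ref{A:identify}--\ref{A:rates} hold with $\Theta$ replaced by $\bar{B}_\veps(\theta^0)$. Differentiability implies continuity, so on the compact set $\bar{B}_\veps(\theta^0)$ the likelihood attains a global maximum at some $\hat{\theta}_n$, $\pr$-almost surely. Now apply Lemma \ref{lem:consist} to this restricted parameter set with an inner radius $\delta \in (0, \veps)$: since the assumptions hold on $\bar{B}_\veps(\theta^0)$, the annulus $A = \bar{B}_\veps(\theta^0) \cap B_\delta(\theta^0)^c$ is identified, and the argument in the proof of Lemma \ref{lem:consist} yields $\pr\big(\sup_{\theta \in A} L_n(\theta; Y) \geq 1\big) \to 0$. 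Hence, with probability tending to one, $L_n(\theta; Y) < 1 = L_n(\theta^0; Y) \leq L_n(\hat{\theta}_n; Y)$ for every $\theta \in A$, so $\hat{\theta}_n$ cannot lie in the annulus and must belong to $B_\delta(\theta^0) \subseteq B_\veps(\theta^0)$.

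On this event $\hat{\theta}_n$ is an interior point of $\bar{B}_\veps(\theta^0)$ at which $L_n(\theta; Y)$ attains a global maximum over the ball; it is therefore a local maximizer of $L_n(\theta; Y)$, and since $L_n(\hat{\theta}_n; Y) \geq 1 > 0$ and $L_n$ is differentiable there, the first-order condition gives $\nabla \Lambda_n(\hat{\theta}_n; Y) = \nabla L_n(\hat{\theta}_n; Y)/L_n(\hat{\theta}_n; Y) = 0$. As the probability of this event tends to one and $\veps$ was an arbitrary small radius, this delivers the claimed conclusion.

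I expect the only real obstacle to be bookkeeping rather than substance: one must check that the three radii involved --- the differentiability neighborhood of $\theta^0$, the outer radius $\veps$ on which the assumptions are assumed to hold, and the inner radius $\delta$ separating center from boundary --- can be chosen consistently, and that Lemma \ref{lem:consist} may legitimately be invoked on the restricted parameter set $\bar{B}_\veps(\theta^0)$, which is exactly what the theorem's hypothesis licenses. The one genuinely new ingredient compared with Theorem \ref{thm:wald} is the passage from ``interior global maximizer of the restricted problem'' to ``root of the score equation,'' which relies on the interiority delivered by Lemma \ref{lem:consist} together with differentiability; this is the step that distinguishes Cramér-type from Wald-type consistency and the only place probabilistic content enters.
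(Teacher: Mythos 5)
Your proof is correct and follows essentially the same route as the paper's: continuity on the compact ball $\bar{B}_\veps(\theta^0)$ gives a maximizer, Lemma \ref{lem:consist} applied to the restricted parameter set rules out maximizers near the boundary, and interiority plus differentiability yields the root of the score equation. The only cosmetic difference is that you exclude a full annulus $\bar{B}_\veps(\theta^0) \cap B_\delta(\theta^0)^c$ with an inner radius $\delta < \veps$, whereas the paper applies the lemma with radius $\veps$ itself so that only the sphere $\partial B_\veps(\theta^0)$ need be excluded; both are legitimate instances of the same argument.
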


\begin{proof}
 Since $\theta^0$ is interior we may assume $\veps > 0$ is small enough that all points of $\bar{B}_\veps(\theta^0)$ are interior. Almost sure differentiability of $L_n(\theta; Y)$ implies almost sure continuity. Thus, $L_n(\theta ; Y)$ attains a local maximum on the compact $\bar{B}_\veps(\theta^0)$, $\pr$-almost surely. By Lemma \ref{lem:consist}, with probability tending to one, there are no such maximizers in $\bar{B}_\veps(\theta^0)\setminus B_\veps(\theta^0) = \partial B_\veps(\theta^0)$. Thus, with probability tending to one, there exists a local maximizer in $B_\veps(\theta^0)$. Since $L_n(\theta; Y)$ and hence $\Lambda_n(\theta; Y)$ is $\pr$-almost surely differentiable, any such maximizer must be a root to the likelihood equation $\nabla \Lambda_n(\theta; Y) = 0$.
\end{proof}

In the next section we apply Theorem \ref{thm:cramer} to two special cases of the MGLMM described in Section~\ref{sec:intro}. We also discuss in more detail how to think about the subcollections and subsets in specific models.

\section{Application to multivariate mixed models} \label{sec:appl}

\subsection{Longitudinal linear mixed model} \label{sec:lmm}

The first model we consider is an extension of a variance components model that has been studied previously \citep{Miller1977}. In addition to dependence between subjects induced by crossed random effects the model incorporates autoregressive temporal dependence between measurements from the same subject. To make the discussion clearer we assume easy-to-specify fixed and random effect structures. This allows us to focus on the core issues, that is, on how to select subcollections and subsets that can be used to verify the conditions of our theory.  Our model includes a baseline mean and a treatment effect. A general fixed effect design matrix could be treated the same way as in our second example, discussed in Section \ref{sec:logit_norm}. Before establishing consistency, we discuss the model definition and how to select appropriate subcollections.

Suppose for subjects $(i, j)$, $i = 1, \dots, N$ and $j = 1, \dots, N$, and time points $t = 1, \dots, T$, we observe the response $Y_{i, j, t}$, where for convenience we assume both $N$ and $T$ are even. Let the stacked vector of responses be $$Y = [Y_{1, 1, 1}, \dots, Y_{1, 1, T}, Y_{1, 2, 1}, \dots, Y_{N, N, T}]^\tsp \in \mathbb R^{n},\ n = TN^2.$$
Recall from  the introduction that the MGLMM is specified by the conditional distribution $f_\theta^n(y \mid u)$ and the distribution of the random effects, $\phi^r_\theta(u)$. For a linear mixed model we let $f_\theta^n(y\mid u)$ be the density of a multivariate normal distribution with mean $X\beta + Zu$ and covariance matrix $\theta_3 I_n$, $\theta_3 > 0$, where the two components of $\beta = [\theta_1, \theta_2]^\tsp \in \mathbb{R}^2$ are a baseline mean and a treatment effect, respectively, and $I_k$ denotes the $k\times k$ identity matrix. Note, in the notation of the introduction, the dispersion parameter in the conditional distribution is $\tau_i  = \theta_3$, for all $i$. We treat $\theta_3$ as a parameter to be estimated and not as known, which is otherwise common in the literature.

Let $h_n$ be a vector of zeros and ones where the $i$th element is one if it corresponds to an observation in time $t \leq T / 2$ and zero otherwise and let $1_n$ denote an $n$-vector of ones. We take $X = [1_n, h_n] \in \mathbb R^{TN^2 \times 2}$, which corresponds to a treatment being applied in the first half of the experiment. Unless $T$ is fixed, which we do not assume, this setup implies the predictors change with $n$. Indeed, as $T$ grows, a particular observation can go from being made in the latter half of the experiment to the earlier half. Thus, the responses form a triangular array.

Partition $U$ into three independent sub-vectors, $U^{(1)} \sim \rN(0, \theta_4 I_N)$, $U^{(2)}\allowbreak \sim \rN(0, \theta_5 I_N)$, and $U^{(3)} \sim \rN(0, \theta_6 I_{N^2} \otimes \Psi)$, where $\Psi = (\Psi_{i, j}) = (\theta_7^{\vert i - j\vert})$ is a first order autoregressive correlation matrix, $\theta_i > 0$, $i = 4, 5, 6$, and $\theta_7 \in (-1, 1)$. We will use $U^{(1)}$ and $U^{(2)}$ as crossed random effects, inducing dependence between subjects, and $U^{(3)}$ to get temporal dependence within subjects. To that end, let $Z_1 = I_N \otimes 1_N \otimes 1_T$, $Z_2 = 1_N \otimes I_N \otimes 1_T$, and $Z = [Z_1, Z_2, I_{TN^2}]$. Then, with $J_k = 1_k1_k^\tsp$, the covariance matrix of the linear predictors $X\beta + ZU$ is
\begin{align*}
 Z \Sigma Z^\tsp & = \theta_4 I_N \otimes J_{NT} + \theta_5 J_N \otimes I_N \otimes J_T + \theta_6 I_{N^2} \otimes \Psi.
\end{align*}
More transparently, for the elements of $\E(Y \mid U) = X\beta + ZU$, it holds that
\begin{align*}
 \cov[\E(Y_{i, j, t}\mid U), \E(Y_{i', j', t'}\mid U)] & = \begin{cases}
  \theta_4 + \theta_5 + \theta_6 \theta_7^{\vert t - t'\vert} & i = i', j = j' \\
  \theta_4                                                    & i = i', j \neq j' \\
  \theta_5                                                    & i \neq i', j = j' \\
  0                                                           & \text{otherwise}
 \end{cases}.
\end{align*}
The marginal density $f_\theta^n(y)$ admits a closed form expression in this example. Specifically, the marginal distribution for $Y$ is multivariate normal with mean $m(\theta) = X\beta(\theta)$ and covariance matrix $C(\theta) = \theta_3 I_{TN^2} + Z \Sigma(\theta)Z^\tsp$. Note that the structure of $C(\theta)$ is similar to that of the covariance matrix of the linear predictors just discussed. In particular, there are many zeros in the covariance matrix $C(\theta)$, i.e. there are many independent observations, but $Y$ cannot be partitioned into independent vectors.

\subsubsection{Subcollection selection}
The model definitions imply that $\Theta = \mathbb R \times \mathbb R \times (0, \infty) \times (0, \infty) \times (0, \infty) \times (0, \infty) \times (-1, 1)$, a subset of $\mathbb R^{7}$, which we equip with the metric induced by the Euclidean norm $\Vert \cdot \Vert$. We write $\theta = (\theta_1, \dots, \theta_7)$.

Subcollections are selected for the purpose of verifying Assumption \ref{A:identify}. The main idea guiding selection is suggested by the fact that identification follows, under regularity conditions, if the subcollection's log-likelihood satisfies a law of large numbers. We will use $s = 2$ such subcollections and require that they together identify $\theta$ in the classical sense. By this we mean that, letting $\nu_\theta^i$ denote the distribution of subcollection $i$ implied by parameter $\theta$, \[
   \{\theta \in \Theta: \nu_\theta^1 = \nu_{\theta^0}^1\} \cap \{\theta \in \Theta: \nu_\theta^2 = \nu_{\theta^0}^2\} = \{\theta^0\}.
\]
With these properties in mind, we take $W^{(1)}$ to consist of the vectors \[W^{(1)}_i = (Y_{2i - 1, 2i - 1, 1}, Y_{2i, 2i, T}) \in \mathbb{R}^2,\quad i = 1, \dots, N/2.\] Because these vectors do not share any random effects, they are independent. In fact, they are i.i.d. multivariate normal with common mean $m_1(\theta) = [\theta_1 + \theta_2, \theta_1]^\tsp$ and common covariance matrix $C_1(\theta) = I_2 (\theta_3 + \theta_4 + \theta_5 + \theta_6)$. Clearly, $\theta_1$ and $\theta_2$ are identified in the classical sense by this subcollection, but not $\theta_3 , \dots, \theta_7$. Note that even though the predictors, and hence the distributions, do not change with $N$ for this subcollection, it is strictly speaking a triangular array unless $T$ is fixed.

To identify the remaining parameters, take $W^{(2)}$ to consist of the vectors
\[
  W^{(2)}_i = (Y_{2i-1, 2i-1, 1}, Y_{2i - 1, 2i - 1, 2}, Y_{2i - 1, 2i - 1, 3}, Y_{2i - 1 , 2i, 1}, Y_{2i, 2i - 1, 1}),
\]
$i = 1, \dots, N/2$. These are also i.i.d. multivariate normal, with common mean $m_2(\theta) = (\theta_1 + \theta_2)1_5$ and common covariance matrix
\begin{align*}
 C_2(\theta) = &
 \begin{bmatrix}
  \sum_{i = 3}^6 \theta_i & \theta_4 + \theta_5 + \theta_6\theta_7 & \theta_4 + \theta_5 + \theta_6\theta_7^2   & \theta_4   & \theta_5 \\
  \cdot  & \sum_{i = 3}^6 \theta_i  & \theta_4 + \theta_5 + \theta_6\theta_7 & \theta_4  & \theta_5  \\
  \cdot  & \cdot & \sum_{i = 3}^6 \theta_i & \theta_4 & \theta_5   \\
  \cdot & \cdot & \cdot  & \sum_{i = 3}^6 \theta_i & 0 \\
  \cdot  & \cdot  & \cdot & \cdot & \sum_{i = 3}^6 \theta_i
 \end{bmatrix}.
\end{align*}
It is straightforward to check that $C_2(\theta) = C_2(\theta')$ implies $\theta_i = \theta_i'$, $i = 3, \dots, 7$.

In summary, the two subcollections together identify $\theta$ in the classical sense. Moreover, since both subcollections consist of i.i.d. multivariate normal vectors, their log-likelihoods satisfy a law of large numbers as $N \to \infty$. With this we are equipped to verify that Assumptions \ref{A:identify} -- \ref{A:rates} hold locally, leading to the main result of the section in Theorem~\ref{thm:lmm}.

\subsubsection{Consistency}
The purpose of this section is to verify the conditions of Theorem \ref{thm:cramer}. The interesting part of that is to check that Assumptions \ref{A:identify} -- \ref{A:rates} hold with $\Theta$ replaced by $\bar{B}_\veps(\theta^0)$, for all small enough $\veps > 0$. For this purpose we will first prove two lemmas that roughly correspond to Assumptions \ref{A:identify} and \ref{A:Lipschitz}. The limiting process we consider is that $N$ tends to infinity while $T$ can be fixed or tend to infinity with $N$, at rates discussed below. Thus, the statements $n \to \infty$ and $N \to \infty$ are equivalent. We will need the following result which is proved in the supplementary material \citep{Ekvall2019_suppl}.

\begin{prop} \label{prop:id_compact}
 If $\Theta$ is compact, $L_{m_i}(\theta; w^{(i)})$ is continuous in $\theta$ on $\Theta$ for every $w^{(i)}$ in the support of $W^{(i)}$, $i = 1, \dots, s$, and $\cap_{i = 1}^s \{\theta \in \Theta: \nu_\theta^i = \nu_{\theta^0}^i\} = \{\theta^0\}$, then for any $\veps > 0$ there are compact sets $\tilde{A}_1, \dots, \tilde{A}_s$ such that $\{\theta \in \Theta: \nu_\theta^i = \nu_{\theta^0}^i\} \cap \tilde{A}_i = \emptyset$, $i = 1, \dots, s$, and $\cup_{i = 1}^s \tilde{A}_i = \Theta \cap B_\veps(\theta^0)^c$.
\end{prop}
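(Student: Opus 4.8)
\emph{Proof proposal.} Write $N_i = \{\theta \in \Theta : \nu_\theta^i = \nu_{\theta^0}^i\}$ for the set of parameters indistinguishable from $\theta^0$ by subcollection $i$, so that the hypothesis reads $\cap_{i=1}^s N_i = \{\theta^0\}$ and the requirement is that each compact $\tilde A_i$ avoid $N_i$ while the $\tilde A_i$ jointly exhaust $K := \Theta \cap B_\veps(\theta^0)^c$. The plan is to first show each $N_i$ is closed, then exploit compactness of $K$ together with the fact that $\{N_i^c\}_{i=1}^s$ covers $K$.

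The crux is closedness of $N_i$, and this is where the continuity hypothesis enters. I would argue sequentially: suppose $\theta_k \in N_i$ with $\theta_k \to \theta \in \Theta$. Since $\theta_k \in N_i$, we have $L_{m_i}(\theta_k; w) = 1$ for $\nu$-almost every $w$ with $g_{\theta^0}^{m_i}(w) > 0$. Removing the countable union over $k$ of the corresponding $\nu$-null exceptional sets leaves a full-measure set on which $L_{m_i}(\theta_k; w) = 1$ for every $k$ at once; continuity of $\theta \mapsto L_{m_i}(\theta; w)$ on the support then forces $L_{m_i}(\theta; w) = 1$, i.e. $g_\theta^{m_i}(w) = g_{\theta^0}^{m_i}(w)$, on that set. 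A total-mass computation, $1 = \int g_\theta^{m_i} \dd\nu \geq \int_{\{g_{\theta^0}^{m_i}>0\}} g_\theta^{m_i}\dd\nu = \int_{\{g_{\theta^0}^{m_i}>0\}} g_{\theta^0}^{m_i}\dd\nu = 1$, shows $g_\theta^{m_i}$ assigns no mass off the support of $g_{\theta^0}^{m_i}$, so $g_\theta^{m_i} = g_{\theta^0}^{m_i}$ holds $\nu$-almost everywhere and $\theta \in N_i$. Because $\Theta$ lies in a metric space, sequential closedness yields that $N_i$ is closed, hence compact as a closed subset of the compact $\Theta$.

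With $N_i$ closed, its complement in $\Theta$ is relatively open. Since $\cap_i N_i = \{\theta^0\}$, every $\theta \in K$ satisfies $\theta \neq \theta^0$ and therefore lies in $N_i^c$ for at least one index $i = i(\theta)$; thus $\{N_i^c\}_{i=1}^s$ is a finite open cover of $K$. For each $\theta \in K$ I would choose a radius $r_\theta > 0$ small enough that $\bar B_{r_\theta}(\theta) \cap \Theta \subseteq N_{i(\theta)}^c$, which is possible because $N_{i(\theta)}^c$ is open and contains $\theta$. The open balls $\{B_{r_\theta}(\theta)\}_{\theta \in K}$ cover the compact $K$, so finitely many, centered at $\theta_1, \dots, \theta_M$, already do.

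Finally I would assemble the desired sets by grouping these balls according to index: set $\tilde A_i = \bigcup_{k : i(\theta_k) = i} \big(\bar B_{r_{\theta_k}}(\theta_k) \cap K\big)$. Each $\tilde A_i$ is a finite union of closed subsets of the compact $K$, hence compact; every ball appearing in $\tilde A_i$ lies in $N_i^c$ by construction, so $\tilde A_i \cap N_i = \emptyset$; and since the chosen balls cover $K$ while each $\tilde A_i \subseteq K$, the union over $i$ recovers exactly $K = \Theta \cap B_\veps(\theta^0)^c$. The only genuinely delicate step is the closedness of $N_i$; the remainder is a routine shrinking of a finite open cover of a compact set, and no part of the argument relies on the subcollections being i.i.d.
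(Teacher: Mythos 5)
Your proposal is correct, and it takes the route the proposition's hypotheses are designed for: the continuity of $\theta \mapsto L_{m_i}(\theta; w^{(i)})$ on the support exists precisely to show that each indistinguishability set $N_i = \{\theta \in \Theta : \nu^i_\theta = \nu^i_{\theta^0}\}$ is closed, after which the conclusion follows from a routine finite-cover argument on the compact set $\Theta \cap B_\veps(\theta^0)^c$; the paper relegates its proof to the supplementary material, but that proof proceeds along the same lines. One small point you should make explicit: when discarding the countable union of $\nu$-null exceptional sets, also intersect the retained set with the topological support of $W^{(i)}$, since that is where continuity is assumed to hold; this costs nothing because $\{w : g^{m_i}_{\theta^0}(w) > 0\}$ minus the support is $\nu$-null, so your total-mass computation goes through unchanged.
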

Note, when applying the proposition in the present application, $m_i = N$, $s = 2$, and $\Theta$ is replaced by $\bar{B}_\veps(\theta^0)$. As we will see in the proof of the following lemma, the proposition is useful because the $\tilde{A}_i$s it gives are compact. Lemma \ref{lem:lmm_id} formalizes verification of Assumption \ref{A:identify}.

\begin{lem} \label{lem:lmm_id} If $\theta^0$ is an interior point of $\Theta$, then for all small enough $\veps > 0$ there exist subsets $A_1$ and $A_2$ such that $A_1 \cup A_2 = \partial B_\veps(\theta^0)$,
 \begin{enumerate}
  \item $N^{-1}\sup_{\theta \in A_i} \E [\Lambda_{N/2}(\theta; W^{(i)})] = \sup_{\theta \in A_i}\E [\Lambda_1(\theta; W^{(i)}_1)]/2 < 0$,
  \item $\pr$-almost surely, $N^{-1}\sup_{\theta \in A_i}\vert \Lambda_{N/2}(\theta; W^{(i)}) - \E[\Lambda_{N/2}(\theta; W^{(i)})] \vert \to 0$, and, consequently;
  \item $A_i$ is identified by $W^{(i)}$ with an identification rate $a_{n, i} = o(e^{-\epsilon N(n) })$ for some $\epsilon > 0$, $i = 1, 2$.
 \end{enumerate}
\end{lem}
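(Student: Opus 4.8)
The plan is to realize all three items through the two i.i.d.\ subcollections, deducing the exponential identification rate in item~3 as a consequence of items~1 and~2. The starting point is Proposition~\ref{prop:id_compact}, which supplies a decomposition of the relevant parameter region into pieces that are each separated from the parameter values that a given subcollection cannot distinguish from $\theta^0$.

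First I would fix $\veps_0 > 0$ small enough that $\bar B_{\veps_0}(\theta^0) \subseteq \Theta$, which is possible because $\theta^0$ is interior. Throughout $\Theta$ the common covariance matrices $C_1(\theta)$ and $C_2(\theta)$ of the subcollection vectors are positive definite, being principal submatrices of the full marginal covariance $C(\theta) \succeq \theta_3 I_n \succ 0$. For any $\veps \in (0, \veps_0)$ I would apply Proposition~\ref{prop:id_compact} with $\Theta$ replaced by the compact set $\bar B_{\veps_0}(\theta^0)$, $s = 2$, and the two subcollections. Its hypotheses hold: each $L_{N/2}(\theta; w^{(i)})$ is continuous in $\theta$ since the densities are Gaussian with nondegenerate covariance, and the global classical identification $\cap_i \{\theta : \nu_\theta^i = \nu_{\theta^0}^i\} = \{\theta^0\}$ established during subcollection selection restricts to $\bar B_{\veps_0}(\theta^0)$. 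The proposition then yields compact sets $\tilde A_1, \tilde A_2$ with $\tilde A_i \cap \{\theta : \nu_\theta^i = \nu_{\theta^0}^i\} = \emptyset$ and $\tilde A_1 \cup \tilde A_2 = \bar B_{\veps_0}(\theta^0) \cap B_\veps(\theta^0)^c$. Since $\partial B_\veps(\theta^0)$ lies inside this annulus, setting $A_i = \tilde A_i \cap \partial B_\veps(\theta^0)$ gives compact sets with $A_1 \cup A_2 = \partial B_\veps(\theta^0)$, each still disjoint from $\{\theta : \nu_\theta^i = \nu_{\theta^0}^i\}$.

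For item~1, I would use that $\E[\Lambda_1(\theta; W^{(i)}_1)]$ equals the negative Kullback--Leibler divergence between the $\theta^0$- and $\theta$-distributions of $W^{(i)}_1$, hence is nonpositive and vanishes exactly on $\{\theta : \nu_\theta^i = \nu_{\theta^0}^i\}$. On $A_i$ it is therefore strictly negative pointwise; since $\theta \mapsto \E[\Lambda_1(\theta; W^{(i)}_1)]$ is continuous (Gaussian family, nondegenerate covariance, finite moments) and $A_i$ is compact, the supremum is attained and strictly negative. The stated equality is immediate from the i.i.d.\ structure $\Lambda_{N/2}(\theta; W^{(i)}) = \sum_{k=1}^{N/2}\Lambda_1(\theta; W^{(i)}_k)$, whence $N^{-1}\E[\Lambda_{N/2}(\theta; W^{(i)})] = \E[\Lambda_1(\theta; W^{(i)}_1)]/2$ and the positive factor passes through the supremum. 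Item~2 is a uniform strong law of large numbers for i.i.d.\ summands over the compact $A_i$; I would verify its two standard hypotheses, almost sure continuity of $\theta \mapsto \Lambda_1(\theta; W^{(i)}_1)$ and an integrable envelope $\E[\sup_{\theta \in A_i}\vert\Lambda_1(\theta; W^{(i)}_1)\vert] < \infty$, both of which hold because $\Lambda_1$ is a quadratic form in the Gaussian vector $W^{(i)}_1$ with coefficients bounded over $A_i$.

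Finally, item~3 follows by combining the first two. Writing $\delta_i = -\sup_{\theta \in A_i}\E[\Lambda_1(\theta; W^{(i)}_1)]/2 > 0$,
\[
 N^{-1}\sup_{\theta \in A_i}\Lambda_{N/2}(\theta; W^{(i)}) \leq N^{-1}\sup_{\theta \in A_i}\bigl\vert \Lambda_{N/2} - \E\Lambda_{N/2}\bigr\vert + N^{-1}\sup_{\theta \in A_i}\E[\Lambda_{N/2}],
\]
where the first term tends to $0$ almost surely by item~2 and the second equals $-\delta_i$ by item~1. Hence, almost surely and for all large $N$, $\sup_{\theta \in A_i}\Lambda_{N/2} \leq -N\delta_i/2$, and monotonicity of the exponential gives $\sup_{\theta \in A_i}L_{N/2}(\theta; W^{(i)}) \leq e^{-N\delta_i/2}$ eventually. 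This shows $A_i$ is identified with rate $O_\pr(e^{-N\delta_i/2})$, which is $o(e^{-\epsilon N})$ for any $\epsilon < \delta_i/2$, after recalling $N = N(n)$. I expect the main obstacle to be the regularity needed for item~2 --- ensuring continuity and an integrable dominating envelope uniformly on the compact pieces --- which is exactly where one leans on the covariances staying nondegenerate; the strict negativity in item~1 is essentially handed to us by Proposition~\ref{prop:id_compact} together with compactness, and item~3 is then a short bookkeeping step.
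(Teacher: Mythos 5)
Your overall route is the same as the paper's: invoke Proposition \ref{prop:id_compact} to produce compact sets $A_1, A_2$ covering $\partial B_\veps(\theta^0)$ and disjoint from the sets $\{\theta : \nu_\theta^i = \nu_{\theta^0}^i\}$, then run the classical Ferguson-style argument (strict negativity of the expected log-likelihood ratio via Kullback--Leibler, plus a uniform strong law) on each compact piece, and finally convert points 1--2 into an exponential identification rate. Your variation of applying the proposition on a fixed ball $\bar B_{\veps_0}(\theta^0)$ and intersecting the resulting annulus pieces with the sphere is harmless and equivalent to the paper's choice of replacing $\Theta$ by $\bar B_\veps(\theta^0)$ directly. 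Items 1 and 3 are handled correctly and essentially as in the paper.

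The genuine gap is in item 2. You invoke ``a uniform strong law of large numbers for i.i.d.\ summands,'' i.e.\ a classical uniform SLLN for a fixed i.i.d.\ \emph{sequence}. But the subcollections here are triangular arrays whenever $T$ grows with $N$: the vector $W^{(1)}_i = (Y_{2i-1,2i-1,1}, Y_{2i,2i,T})$ literally changes identity as $T$ changes (and the lemma must cover the regime $T \to \infty$ allowed by Theorem \ref{thm:lmm}). Almost sure convergence of row averages of a triangular array does not follow from the ordinary SLLN even when every row is i.i.d.\ with the same fixed distribution --- the theorem's conclusion is about one sequence of random variables, and the coupling across rows is arbitrary here. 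This is exactly the point the paper flags: ``one subcollection is a triangular array and so we use a different strong law.'' The gap is fixable with the tools you already have in hand: since the common distribution of the summands does not depend on $n$ and all moments of the Gaussian quadratic forms are finite, a fourth-moment Chebyshev bound gives summable (indeed complete) convergence of the pointwise averages, which is insensitive to how rows are coupled, and Borel--Cantelli then yields the almost sure statement; your compactness, continuity, and envelope observations upgrade this to uniformity over $A_i$. As written, though, your justification of item 2 appeals to a theorem whose hypotheses fail in the growing-$T$ regime, so this step needs to be replaced.
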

\begin{proof}
We give an outline here and a detailed proof in the supplemental material \citep{Ekvall2019_suppl}. It is easy to check that the requirements of Proposition \ref{prop:id_compact} are satisfied with $\Theta$ replaced by $\bar{B}_\veps(\theta^0)$. By taking the $A_i$s to be the $\tilde{A}_i$s given by Proposition \ref{prop:id_compact}, proving points 1 -- 2 is similar to proving that MLEs based on subcollection $i$ are consistent if the parameter set is restricted to the compact set $A_i\cup\{\theta^0\}$, $i = 1, 2$. Since the subcollection components are i.i.d., this is straightforward using classical ideas \citep[Theorems 16 and 17]{Ferguson1996}. The only difference from the referenced work is that one subcollection is a triangular array and so we use a different strong law. Point 3 follows from points 1 and 2.
\end{proof}

Note that, in this lemma and elsewhere, $\epsilon$ is a small number that is defined in context whereas $\veps$ always denotes the radius of the neighborhood of $\theta^0$ we are considering. It remains to verify the assumptions concerned with the regularity of the log-likelihood of the full data. When the log-likelihood is differentiable, Lipschitz continuity follows from the mean value theorem if the gradient is bounded. The following lemma uses that to verify Assumption \ref{A:Lipschitz}. The resulting Lipschitz constant, i.e. the bound of the gradient, is the same for both \(A_1\) and \(A_2\). The lemma also gives a probabilistic bound on the order of this Lipschitz constant as \(n \to \infty\) that will be useful when verifying Assumption \ref{A:rates}.
\begin{lem} \label{lem:lmm_lipsh}
 If \(\theta^0\) is an interior point of \(\Theta\), then for every \(n\) and small enough \(\veps > 0\) there exists a random variable \(K_n\) such that, \(\pr\)-almost surely,
 \[
  \sup_{\theta \in \bar{B}_\veps(\theta^0)} \Vert \nabla \Lambda_n(\theta; Y)\Vert \leq K_n = o_\pr(n^b),
  \]
for some \(b > 0\).

\end{lem}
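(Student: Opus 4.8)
The plan is to exploit the closed form of the Gaussian log-likelihood ratio to write the gradient explicitly and then bound it term by term, uniformly over the ball. Write $C = C(\theta)$ and $m = m(\theta)$, recall that $Y \sim \rN(m(\theta^0), C(\theta^0))$ under $\pr$, and note that $m$ depends only on $(\theta_1, \theta_2)$ while $C$ depends only on $(\theta_3, \dots, \theta_7)$. Differentiating
\[
 \Lambda_n(\theta; Y) = -\tfrac{1}{2}\log\frac{\det C}{\det C(\theta^0)} - \tfrac{1}{2}(Y - m)^\tsp C^{-1}(Y - m) + \tfrac{1}{2}(Y - m(\theta^0))^\tsp C(\theta^0)^{-1}(Y - m(\theta^0))
\]
gives, for each coordinate $k$,
\[
 \partial_k \Lambda_n(\theta; Y) = -\tfrac{1}{2}\tr(C^{-1}\partial_k C) + (\partial_k m)^\tsp C^{-1}(Y - m) + \tfrac{1}{2}(Y - m)^\tsp C^{-1}(\partial_k C)C^{-1}(Y - m).
\]
Since $d = 7$ is fixed, it suffices to bound each of the three terms --- a deterministic trace term, a term linear in $Y - m$, and a quadratic one --- uniformly in $\theta \in \bar B_\veps(\theta^0)$, and then take $K_n$ to be a constant times the worst-case sum.

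The first ingredient is a uniform spectral bound. Because $\theta^0$ is interior, for all small enough $\veps$ every $\theta \in \bar B_\veps(\theta^0)$ satisfies $\theta_3 \geq \theta_3^0 - \veps =: c > 0$, $\theta_4, \theta_5, \theta_6 > 0$, and $|\theta_7| < 1$; hence $Z\Sigma(\theta)Z^\tsp \succeq 0$ and $C(\theta) = \theta_3 I_n + Z\Sigma(\theta)Z^\tsp \succeq c I_n$, so $\emax(C(\theta)^{-1}) \leq 1/c$ uniformly over the ball. The second ingredient is control of the derivative matrices through their Kronecker structure, using $\Vert A \otimes B\Vert_{\mathrm{op}} = \Vert A\Vert_{\mathrm{op}}\Vert B\Vert_{\mathrm{op}}$ and $\tr(A \otimes B) = \tr(A)\,\tr(B)$. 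One finds $\partial_{\theta_1} m = 1_n$ and $\partial_{\theta_2} m = h_n$ have norm at most $\sqrt n$; $\partial_{\theta_4} C = I_N \otimes J_{NT}$ and $\partial_{\theta_5} C = J_N \otimes I_N \otimes J_T$ have operator norm $NT$; and $\partial_{\theta_3} C = I_n$, $\partial_{\theta_6} C = I_{N^2} \otimes \Psi$, $\partial_{\theta_7} C = \theta_6 I_{N^2} \otimes \partial_{\theta_7}\Psi$ have operator norm $O(1)$ uniformly, the last two because the row sums of $\Psi$ and of $\partial_{\theta_7}\Psi$ are bounded by the convergent series $\sum_k |\theta_7|^{|k|}$ and $\sum_k |k|\,|\theta_7|^{|k| - 1}$ when $|\theta_7|$ stays bounded away from $1$ on the ball. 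For the trace terms, each $\partial_k C$ with $k \in \{3,4,5,6\}$ is positive semidefinite with trace $n$ (e.g.\ $\tr(I_N \otimes J_{NT}) = N \cdot NT = n$), so $|\tr(C^{-1}\partial_k C)| \leq \emax(C^{-1})\tr(\partial_k C) \leq n/c$, while for $k = 7$ one bounds the trace term by $\emax(C^{-1})$ times the nuclear norm of $I_{N^2}\otimes \partial_{\theta_7}\Psi$, which is $O(N^2 T) = O(n)$; thus all trace terms are $O(n)$.

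It remains to control $Y - m(\theta)$, which I would do uniformly by splitting $Y - m(\theta) = (Y - m(\theta^0)) + (m(\theta^0) - m(\theta))$. The deterministic part has norm at most $2\veps\sqrt n$ on the ball, and for the random part $\E\Vert Y - m(\theta^0)\Vert^2 = \tr C(\theta^0) = O(n)$ gives $\Vert Y - m(\theta^0)\Vert = O_\pr(\sqrt n)$ by Markov's inequality; hence $\sup_{\theta \in \bar B_\veps(\theta^0)}\Vert Y - m(\theta)\Vert = O_\pr(\sqrt n)$. Combining, the linear terms are $O_\pr(\sqrt n \cdot c^{-1} \cdot \sqrt n) = O_\pr(n)$, and the quadratic term for coordinate $k$ is at most $\tfrac{1}{2}\emax(C^{-1})^2\Vert\partial_k C\Vert_{\mathrm{op}}\Vert Y - m\Vert^2$, which is largest for $k \in \{4, 5\}$ and of order $O_\pr(NT \cdot n) = O_\pr(n^2/N)$ since $n = TN^2$. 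Summing over the finitely many coordinates, $\sup_{\theta \in \bar B_\veps(\theta^0)}\Vert \nabla \Lambda_n(\theta; Y)\Vert \leq K_n$ with $K_n = O_\pr(n^2/N)$, and because $N \to \infty$ in the limiting regime considered this is $o_\pr(n^2)$, proving the claim with $b = 2$.

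The main obstacle is the bookkeeping of operator and trace norms of the structured covariance derivatives, and in particular recognizing that the quadratic term driven by $\partial_{\theta_4}C$ and $\partial_{\theta_5}C$ --- whose operator norms grow like $NT$ --- is the dominant contribution; everything hinges on this growth being slower than $n^2$, i.e.\ on $n^2/N = o(n^2)$, which holds precisely because $N \to \infty$. A secondary technical point is verifying that all bounds are uniform in $\theta$, especially the $O(1)$ operator-norm bound for the $\theta_7$-derivative of the autoregressive matrix $\Psi$, which requires keeping $|\theta_7|$ bounded away from $1$ on the ball.
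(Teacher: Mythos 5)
Your proof is correct and takes essentially the same route as the paper's: it works from the closed-form Gaussian log-likelihood ratio, differentiates explicitly, and controls the gradient via uniform eigenvalue/operator-norm bounds on $C(\theta)$, $C(\theta)^{-1}$, and the Kronecker-structured derivative matrices, together with $\Vert Y - m(\theta^0)\Vert = O_\pr(\sqrt{n})$, giving a Lipschitz bound of polynomial order. One small remark: since the lemma only asks for \emph{some} $b > 0$, your reliance on $N \to \infty$ to get $o_\pr(n^2)$ is not actually needed --- taking $b = 3$ makes $K_n = O_\pr(n^2/N) \leq O_\pr(n^2) = o_\pr(n^3)$ unconditionally.
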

Proving Lemma \ref{lem:lmm_lipsh} (see the supplementary material \citep{Ekvall2019_suppl}) is largely an exercise in bounding the eigenvalues of the covariance matrix $C(\theta)$ and its inverse on interior points of $\Theta$. We are ready for the main result of the section.

\begin{thm} \label{thm:lmm}
 If $\theta^0$ is an interior point of $\Theta$ and $T = O(N^k)$ for some $k \geq 0$ as $N \to \infty$, then, $\pr$-almost surely, there exists a sequence $\hat{\theta}_n$ of roots to the likelihood equations $\nabla \Lambda_n(\theta; Y) = 0$ such that $\hat{\theta}_n \ipto \theta^0$.
\end{thm}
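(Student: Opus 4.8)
The plan is to apply Theorem \ref{thm:cramer} with $d = 7$. This requires checking two things: almost sure differentiability of $L_n(\theta; Y)$ on a neighborhood of the interior point $\theta^0$, and that Assumptions \ref{A:identify}--\ref{A:rates} hold with $\Theta$ replaced by $\bar{B}_\veps(\theta^0)$ for all small enough $\veps > 0$. Differentiability is immediate from the model structure: $Y$ is marginally multivariate normal with mean $m(\theta) = X\beta(\theta)$ and covariance $C(\theta) = \theta_3 I_n + Z\Sigma(\theta)Z^\tsp$, both smooth in $\theta$, and $C(\theta)$ is positive definite at interior points, so $f^n_\theta(y)$ and hence $L_n(\theta; Y)$ are smooth in $\theta$ there. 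Lemma \ref{lem:lmm_id} already supplies Assumption \ref{A:identify}, furnishing the subcollections $W^{(1)}, W^{(2)}$ and sets $A_1, A_2$ covering $\partial B_\veps(\theta^0)$ together with the exponential identification rate $a_{n,i} = o(e^{-\epsilon N})$. Lemma \ref{lem:lmm_lipsh} supplies Assumption \ref{A:Lipschitz}, since by the mean value theorem the gradient bound $\sup_{\theta \in \bar{B}_\veps(\theta^0)} \Vert \nabla \Lambda_n(\theta; Y)\Vert \leq K_n = o_\pr(n^b)$ yields a Lipschitz constant $K_n$ valid on both $A_1$ and $A_2$.

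The crux is therefore Assumption \ref{A:rates}: I must exhibit a cover of each $A_i$ by $M_{n,i}$ balls of radius $\delta_{n,i}$ satisfying $K_n \delta_{n,i} \ipto 0$ and $M_{n,i} a_{n,i} \to 0$. Since each $A_i \subseteq \partial B_\veps(\theta^0)$ is a bounded subset of $\mathbb{R}^7$, a standard volume argument covers it by $M_{n,i} = O(\delta_{n,i}^{-7})$ balls of radius $\delta_{n,i}$. I would take a polynomial radius, say $\delta_{n,i} = n^{-(b+1)}$, which gives $K_n \delta_{n,i} = o_\pr(n^b)\, n^{-(b+1)} = o_\pr(n^{-1}) \ipto 0$, the first rate condition. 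The covering count is then $M_{n,i} = O(n^{7(b+1)})$, polynomial in $n$.

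The decisive observation is that the hypothesis $T = O(N^k)$ forces $n = TN^2 = O(N^{k+2})$, so $N \geq c\, n^{1/(k+2)}$ for large $n$ and some $c > 0$. Hence the identification rate $a_{n,i} = o(e^{-\epsilon N}) = o\bigl(\exp(-\epsilon c\, n^{1/(k+2)})\bigr)$ decays faster than the reciprocal of any polynomial in $n$, so that $M_{n,i} a_{n,i} = O(n^{7(b+1)})\, o\bigl(\exp(-\epsilon c\, n^{1/(k+2)})\bigr) \to 0$, which is the second rate condition. With Assumptions \ref{A:identify}--\ref{A:rates} verified on $\bar{B}_\veps(\theta^0)$, Theorem \ref{thm:cramer} gives, with probability tending to one, a root of $\nabla \Lambda_n(\theta; Y) = 0$ in $B_\veps(\theta^0)$ for every small enough $\veps$. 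To produce the claimed sequence I would fix a small $\veps_0 > 0$ with $\bar{B}_{\veps_0}(\theta^0)$ interior, let $\hat{\theta}_n$ be a maximizer of $L_n(\theta; Y)$ over the compact set $\bar{B}_{\veps_0}(\theta^0)$ (which exists $\pr$-almost surely by continuity), and note that Lemma \ref{lem:consist} places $\hat{\theta}_n$ in $B_\veps(\theta^0)$ with probability tending to one for every $\veps \in (0, \veps_0)$; thus $\hat{\theta}_n \ipto \theta^0$, and on the event that $\hat{\theta}_n$ is interior it is a local maximizer, hence a root.

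I expect the only genuine subtlety to be bookkeeping the interplay of the three rates, i.e.\ confirming that the exponential-in-$N$ identification rate dominates the polynomial-in-$n$ covering number and Lipschitz growth. This domination rests entirely on the polynomial relationship $n = O(N^{k+2})$ guaranteed by $T = O(N^k)$; were $T$ allowed to grow faster than every polynomial in $N$, the exponential advantage of $a_{n,i}$ could be lost, which is precisely what the hypothesis rules out.
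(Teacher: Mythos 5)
Your proof is correct and essentially identical to the paper's: both verify Theorem \ref{thm:cramer} by combining Lemma \ref{lem:lmm_id} (Assumption \ref{A:identify} with rate $a_{n,i} = o(e^{-\epsilon N})$), Lemma \ref{lem:lmm_lipsh} (Assumption \ref{A:Lipschitz} with $K_n = o_\pr(n^b)$), and a polynomial-in-$n$ cover of $A_i \subseteq \partial B_\veps(\theta^0)$, with the hypothesis $T = O(N^k)$ ensuring $N \gtrsim n^{1/(k+2)}$ so the exponential identification rate dominates the polynomial covering count. The differences are immaterial: the paper uses the sharper sphere covering bound $O([\veps/\delta]^{d-1})$ with $\delta_{n,i} = n^{-b}$ where you use a cruder volume bound $O(\delta^{-d})$ with $\delta_{n,i} = n^{-(b+1)}$, and your closing extraction of the sequence of roots is extra detail the paper leaves implicit in Theorem \ref{thm:cramer} (note only that invoking Lemma \ref{lem:consist} over $\bar{B}_{\veps_0}(\theta^0)$ strictly requires identifying the annuli $\bar{B}_{\veps_0}(\theta^0) \cap B_\veps(\theta^0)^c$, not just the spheres Lemma \ref{lem:lmm_id} supplies, though the same machinery via Proposition \ref{prop:id_compact} delivers this).
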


\begin{proof}
 We verify the conditions of Theorem \ref{thm:cramer}. Fix an arbitrary $\veps > 0$. Since $\theta^0$ is interior we may assume $\veps$ is small enough that all points in $\bar{B}_\veps(\theta^0)$ are interior points of $\Theta$.  As is proven in the supplementary material \citep{Ekvall2019_suppl}, $\ell_n(\theta; Y) = \log(f_\theta^n(Y))$ is $\pr$-almost surely differentiable on $\bar{B}_\veps(\theta^0)$, so $\Lambda_n(\theta; Y) = \ell_n(\theta, Y) - \ell_n(\theta^0; Y)$ is too. By Lemma \ref{lem:lmm_id}, Assumption \ref{A:identify} holds with what is there denoted $\Theta$ replaced by $\bar{B}_\veps(\theta^0)$. The identification rate is exponentially fast in $N / 2$, $a_n = o(e^{-N\epsilon})$ for some $\epsilon > 0$. Lemma \ref{lem:lmm_lipsh} shows that $\Lambda_n(\theta; Y)$ is $K_n$-Lipschitz on both $A_1$ and $A_2$, and that $K_n = o_\pr(n^b)$ for some $b > 0$. This verifies Assumption \ref{A:Lipschitz}. It remains only to verify that the rate conditions in Assumption 3 hold. The $\delta$-covering number of the sphere $ \partial B_\veps(\theta^0)$ is $O([\veps / \delta]^{d - 1})$ as $\delta \to 0$ \citep[][Lemma 1]{Bronshteyn1975}. Thus, since $A_i \subseteq \partial B_\veps(\theta^0)$, by picking $\delta_{i, n} = n^{-b}$ we can have $M_{n, i} = O(n^{[d - 1]b})$ as $n \to \infty$, $i = 1, 2$. Our choice of $\delta_{n, i}$ ensures $K_{n, i} \delta_{n, i} = K_n\delta_n = o_\pr(1)$, which is the first rate condition. Since the identification rate is exponential in $N/2$ for both subcollections, we have that $M_{n, i}a_{n, i} = O(N^{2b[d - 1]}T^{b[d - 1]}e^{-\epsilon N})$ for some $\epsilon > 0$, which is $o(1)$ as $N \to \infty$ since $T$ is of (at most) polynomial order in $N$.
\end{proof}

We expect the proof technique used here to work in many other models. Essentially, all that is needed is that the subcollections' log-likelihoods satisfy uniform strong laws, that the gradient of the full data log-likelihood is of polynomial order, and that the number of observations in any subcollection grows faster than logarithmically in the total sample size. Here, to show that the gradient of the log-likelihood is of polynomial order (Lemma \ref{lem:lmm_lipsh}) we worked with the closed form expression $2 \log f^n_\theta(Y) =  -\log \det(C(\theta)) - (Y - m(\theta))^\tsp C^{-1}(\theta)(Y - m(\theta)) - n\log(2\pi)$, using that $\Vert Y\Vert$ is of polynomial order and that the eigenvalues of $C(\theta)$ are appropriately bounded by polynomials in $n$ on $B_\varepsilon(\theta^0)$. This illustrates that, in order to determine if the gradient in a given model is of polynomial order or not, one in general has to consider both the stochastic properties of the data and the particular parameterization. Uniform strong laws for the subcollections' log-likelihoods, leading to Lemma \ref{lem:lmm_id}, hold here because the subcollections consist of i.i.d. random variables. This is clearly not necessary; in the next example we consider subcollections with independent but not identically distributed variables, and, similarly, strong laws for stationary stochastic processes may apply if one has a model with subcollections consisting of dependent but identically distributed variables.

It is possible that the assumption that $T = O(N^k)$, $k \geq 0$, could be relaxed by picking other subcollections that also make use of the variation in the time dimension. It is not trivial, however, since the dependence between any two responses sharing a random effect does not vanish as time between the observations increases. Indeed, it is crucial that $N\to \infty$ in this model: if $T\to \infty$ but $N$ is fixed, then the data consist of a fixed number ($N^2$) of vectors of $T$ equicorrelated random variables. In that case it is not possible to find a subcollection that consists of an increasing number of independent variables. Accordingly, one can show that even if one was to simplify our model so that the $N^2$ vectors were independent and one was estimating only a mean parameter, the MLE would not be consistent. In the next section we examine how predictors and mixed-type responses affect the argument.

\subsection{Logit-normal MGLMM} \label{sec:logit_norm}
The model we consider in this section is an extension in several ways of the logistic GLMMs for which the technique based on subcollections was first developed \citep{Jiang2013}. The random effect structures are similar, i.e. crossed, but we have multivariate, mixed-type responses, and predictors. The main ideas for verifying the assumptions of the theory from Section \ref{sec:consist} are the same as in our LMM example. However, due to the inclusion of predictors, we use results from empirical process theory in place of the more classical strong laws used for the LMM. Showing existence of appropriate subsets of the parameter space that the subcollections identify also requires more work than with i.i.d. components. As before, we discuss the model definition and subcollection selection before establishing consistency.

Suppose for subjects $(i, j)$, $i = 1, \dots$ and $N, j = 1, \dots, N$, there are two responses, $Y_{i, j, 1}$ which is continuous and $Y_{i, j, 2}$ which is binary. The vector of all responses is
\begin{align*}
 Y = [Y_{1, 1, 1}, Y_{1, 1, 2}, Y_{1, 2, 1}, \dots, Y_{N, N, 2}]^\tsp \in \mathbb R^n, \ n = 2N^2.
\end{align*}
For each subject we observe a vector of non-stochastic predictors $x_{i, j} \in \mathbb R^p$, the same for both responses. Similarly, $z_{i, j} \in \mathbb R^r$ is the same for both responses. Let $\eta_{i, j, k} = x_{i, j} ^\tsp \beta_k + z_{i, j}^\tsp u$ be the linear predictor, $i = 1, \dots, N$, $j = 1, \dots, N$, $k = 1, 2$, where $\beta_1 = [\theta_1, \dots, \theta_{p}]^\tsp$, $\beta_2 = [\theta_{p + 1}, \dots, \theta_{2p}]^\tsp$. We assume that $\Vert x_{i, j}\Vert \leq 1$ for all $i, j$. In practice this only rules out the possibility that $\Vert x_{i, j}\Vert  = \infty$ since our setting allows for the standardization of predictors. The conditional density of the responses given the random effects that we consider is, up to scaling by $(2\pi)^{-n/2}$,
\begin{align*}\label{eq:mglmm_dens}
 f_\theta^n(y\mid u) & \propto \exp\left[\sum_{i, j}-(y_{i, j, 1} - \eta_{i, j, 1})^2/2 + y_{i, j, 2}\eta_{i, j, 2} - \log\left(1 + e^{\eta_{i, j, 2}}\right)\right].
\end{align*}
Given the random effects, $Y_{i, j, 1}$ is normal with mean $\eta_{i, j, 1}$ and variance 1, and $Y_{i, j, 2}$ is Bernoulli with success probability $1 / (1 + e^{-\eta_{i, j, 2}})$ -- a logistic GLMM. The choice of $\tau_i = 1$ for all $i$ is made for identifiability reasons for the Bernoulli responses, and for convenience for the normal responses. Setting the $\tau_i$s to some other known constants does not fundamentally change the results.

Suppose $U^{(1)} \sim \rN(0, \theta_{d} I_N)$ and $U^{(2)} \sim \rN(0, \theta_{d} I_N)$, independently, with corresponding design matrices $Z_1 = I_N \otimes 1_N \otimes 1_2$ and $Z_2 = 1_N \otimes I_n \otimes 1_2$. Taking $U = [U^{(1) \tsp}, U^{(2) \tsp}]^\tsp$ and $Z = [Z_1, Z_2]$ the linear predictors are $\eta_{i, j, k} = x_{i, j}^\tsp \beta_k + u^{(1)}_i + u_j^{(2)}$. Thus, responses from the same subject share two random effects, responses from different subjects with one of the first two indexes in common share one random effect, and other responses share no random effects and are hence independent. The covariance matrix for the linear predictors is easily computed in the same way as in the LMM. The covariance matrix for responses, however, is less transparent. It is for simplicity that we assume in this section that all random effects have the same variance. It is not necessary for our theory to be operational but this simplification shortens proofs considerably and allows us to focus on the main ideas.

\subsubsection{Subcollection selection}
With $p$ predictors the $(2p + 1)$-dimensional parameter set is $\Theta = \mathbb R^p \times \mathbb R^p \times (0, \infty)$, a subset of $\mathbb R^d$, again equipped with the usual Euclidean metric. The intuition behind the selection of subcollections is that the normal responses should identify the coefficient $\beta_1$ and the variance parameter $\theta_d $. Similarly, the Bernoulli responses should identify the coefficient vector $\beta_2$. With that in mind we take, for $i = 1, 2$,
 \[W^{(i)} = (Y_{1, 1, i}, Y_{2, 2, i}, \dots, Y_{N, N, i})\]
Both of these subcollections consist of independent but not identically distributed random variables -- independence follows from the fact that no components in the same subcollection share random effects. Notice that these subcollections are in practice often triangular arrays since the predictors may need to be scaled by $1 / \max_{i \leq N, j \leq N}\Vert x_{i, j}\Vert$ to satisfy $\Vert x_{i, j}\Vert \leq 1$. All responses in the first subcollection have marginal normal distributions and all responses in the second have marginal Bernoulli distributions.

Identification is more complicated than in our previous example. One issue is that there can be many $\theta_d$ and $\beta_2$ that give the same marginal success probability for the components in the second subcollection. A second issue is that, since the predictors can change with $n$, classical identification for a fixed $n$ does not necessarily lead to identification in the sense of Definition \ref{def:identify}.  Additionally, the approach used in the LMM to find appropriate subsets $A_1$ and $A_2$ by means of Proposition \ref{prop:id_compact} only works in general when the subcollection components are i.i.d. Thus, we take a slightly different route to establishing consistency compared to the LMM.

\subsubsection{Consistency}
In this section we verify the conditions of Theorem \ref{thm:cramer}. The limiting process is that $N \to \infty$, which is equivalent to $n \to \infty$ since $n = 2N^2$. We will first prove two lemmas that roughly correspond to Assumptions \ref{A:identify} and \ref{A:Lipschitz}.

Let $\emin(\cdot)$ denote the minimum eigenvalue of its matrix argument.
\begin{lem} \label{lem:mglmm_id}
 If $\theta^0$ is an interior point of $\Theta$ and
 \begin{equation*}
  \liminf_{N \to \infty} \emin\left(N^{-1}\sum_{i = 1}^N x_{i, i}x_{i, i}^\tsp \right) > 0,
 \end{equation*}
 then for all small enough $\veps > 0$ there exist $A_1$ and $A_2$ such that $A_1 \cup A_2 = \partial B_\veps(\theta^0)$,
 \begin{enumerate}
  \item $\limsup_{N \to \infty}  N^{-1} \sup_{\theta \in A_i}\E [\Lambda_N(\theta; W^{(i)})] < 0$,

  \item $\sup_{\theta \in A_i} N^{-1}\left\vert \Lambda_N(\theta; W^{(i)}) - \E[ \Lambda_N(\theta; W^{(i)})]\right\vert \ipto 0$, and, consequently;

  \item $A_i$ is identified by $W^{(i)}$ with an identification rate $a_{n, i} = o(e^{-\epsilon N })$ for some $\epsilon > 0$, $i = 1, 2$.
 \end{enumerate}
\end{lem}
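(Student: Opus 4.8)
The plan is to use that each subcollection consists of independent components whose marginal laws depend on $\theta$ only through a reduced parameter. Integrating out the Gaussian random effects, the components of $W^{(1)}$ are $\rN(x_{i,i}^\tsp\beta_1,\,1+2\theta_d)$ and those of $W^{(2)}$ are Bernoulli with logit--normal success probabilities $p_i(\theta)=\E[\sigma(x_{i,i}^\tsp\beta_2+V)]$, where $\sigma(t)=(1+e^{-t})^{-1}$ and $V\sim\rN(0,2\theta_d)$. By independence $\Lambda_N(\theta;W^{(i)})=\sum_{k=1}^N\ell_{k,i}(\theta)$ is a sum of per-component log-likelihood ratios, so $\E[\Lambda_N(\theta;W^{(i)})]=-\sum_{k=1}^N\mathrm{KL}_{k,i}(\theta)$ is a negative sum of Kullback--Leibler divergences. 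I would first construct $A_1,A_2$, then prove point~1 (a lower bound on the average divergence that is uniform in large $N$), then point~2 (a uniform weak law for the centered log-likelihood ratios), and deduce point~3 by combining them.

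For the construction I would separate scales on the sphere: set $A_1=\{\theta\in\partial B_\veps(\theta^0):\ |\theta_d-\theta_d^0|\ge\veps^{3/2}\text{ or }\Vert\beta_1-\beta_1^0\Vert\ge\veps/2\}$ and let $A_2$ be the complementary closed set $\{\theta\in\partial B_\veps(\theta^0):\ |\theta_d-\theta_d^0|\le\veps^{3/2}\text{ and }\Vert\beta_1-\beta_1^0\Vert\le\veps/2\}$. Then $A_1\cup A_2=\partial B_\veps(\theta^0)$, both are compact, and on $A_2$ one has $\Vert\beta_2-\beta_2^0\Vert\ge\veps/\sqrt2$ while $|\theta_d-\theta_d^0|\le\veps^{3/2}$, so the variance deviation is of strictly smaller order than the $\beta_2$ deviation. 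Point~1 on $A_1$ is then routine from the explicit normal divergence $N^{-1}\sum_k\mathrm{KL}_{k,1}(\theta)=D(\theta_d)+\tfrac{1}{2(1+2\theta_d)}(\beta_1-\beta_1^0)^\tsp\widehat\Sigma_N(\beta_1-\beta_1^0)$, where $\widehat\Sigma_N=N^{-1}\sum_k x_{k,k}x_{k,k}^\tsp$ and $D(\theta_d)\ge c(\theta_d-\theta_d^0)^2$ near $\theta_d^0$: the eigenvalue hypothesis gives $\liminf_N\emin(\widehat\Sigma_N)>0$, so on each defining branch of $A_1$ the average divergence is bounded below by a positive multiple of $\veps^3$, uniformly in large $N$.

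The delicate part is point~1 on $A_2$, where the marginal law depends on both $\beta_2$ and $\theta_d$ and, as the text notes, distinct pairs can produce the same success probabilities. By Pinsker's inequality it suffices to bound $N^{-1}\sum_k(p_k(\theta)-p_k(\theta^0))^2$ below. I would Taylor expand $p_k$ about $\theta^0$, writing $p_k(\theta)-p_k(\theta^0)=P_\eta^k\,x_{k,k}^\tsp(\beta_2-\beta_2^0)+P_d^k\,(\theta_d-\theta_d^0)+R_k$, where $R_k=O(\veps^2)$ uniformly in $k$ by smoothness of the logit--normal integral, and $P_\eta^k=\E[\sigma'(x_{k,k}^\tsp\beta_2^0+V)]$ is bounded below by a positive constant because $|x_{k,k}^\tsp\beta_2^0|\le\Vert\beta_2^0\Vert$. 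The leading term has averaged square $N^{-1}\sum_k(P_\eta^k x_{k,k}^\tsp(\beta_2-\beta_2^0))^2\gtrsim\emin(\widehat\Sigma_N)\Vert\beta_2-\beta_2^0\Vert^2\gtrsim\veps^2$, whereas the variance term and remainder have averaged square $O(\veps^3)$ since $|\theta_d-\theta_d^0|\le\veps^{3/2}$ on $A_2$; the reverse triangle inequality in $L^2(N^{-1}\sum_k\cdot)$ then yields $N^{-1}\sum_k(p_k(\theta)-p_k(\theta^0))^2\gtrsim\veps^2$, uniformly in large $N$. This scale separation is what rules out the $\beta_2$--$\theta_d$ masking without having to prove global injectivity of the logit--normal map, and it is where the uniform eigenvalue hypothesis does the essential work; I expect this step to be the main obstacle, precisely because the predictors vary with $N$ and so classical identification at a fixed sample size is not enough.

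For point~2 I would apply a uniform weak law to the independent, non-identically distributed summands $\ell_{k,i}(\cdot)$ over the compact $A_i$. Since $\theta$ is finite-dimensional and $\theta\mapsto\ell_{k,i}(\theta)$ is Lipschitz with an envelope and Lipschitz constant whose means are bounded uniformly, the class is Glivenko--Cantelli; combining a pointwise weak law at the points of a finite cover with stochastic equicontinuity yields $\sup_{\theta\in A_i}N^{-1}|\Lambda_N(\theta;W^{(i)})-\E[\Lambda_N(\theta;W^{(i)})]|\ipto0$. The Bernoulli subcollection is bounded, as the success probabilities stay in a compact subset of $(0,1)$, so this is immediate; for the normal subcollection the envelope involves $Y_{k,k,1}^2$ and $|Y_{k,k,1}|$, and it is here that empirical process arguments with integrable envelopes replace the bounded-difference bounds available in the i.i.d.\ case. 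Finally, point~3 follows by combining: point~1 gives $N^{-1}\sup_{\theta\in A_i}\E[\Lambda_N(\theta;W^{(i)})]\le-2\delta<0$ for large $N$ and some $\delta>0$, while point~2 makes $N^{-1}\sup_{\theta\in A_i}|\Lambda_N-\E\Lambda_N|<\delta$ with probability tending to one, so $\sup_{\theta\in A_i}L_N(\theta;W^{(i)})=\exp(\sup_{\theta\in A_i}\Lambda_N(\theta;W^{(i)}))\le e^{-\delta N}$ with probability tending to one, giving the identification rate $a_{n,i}=o(e^{-\epsilon N})$ for any $\epsilon<\delta$.
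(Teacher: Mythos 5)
Your proposal is correct, and while its skeleton (Pinsker reduction for the Bernoulli subcollection, exact normal KL for the Gaussian one, the eigenvalue hypothesis, a ULLN for point 2, and the standard point 1 + point 2 $\Rightarrow$ point 3 combination) matches the paper, the execution of the key identification step is genuinely different. The paper defines $A_2 = \partial B_\veps(\theta^0) \cap \{\vert\theta_d - \theta_d^0\vert \le \zeta\} \cap \{\Vert\beta_2 - \beta_2^0\Vert \ge \veps/2\}$ with an \emph{auxiliary} tolerance $\zeta$, bounds the main term $\vert p_i(\beta_2,\theta_d^0) - p_i(\beta_2^0,\theta_d^0)\vert$ below via the mean value theorem and $\inf_\gamma \E[c''(\gamma + U^{(1)}_i + U^{(2)}_j)] > 0$, and then kills the masking term $\vert p_i(\beta_2,\theta_d) - p_i(\beta_2,\theta_d^0)\vert$ by a soft continuity-and-compactness argument: $\zeta$ is chosen \emph{after the fact} so this term is less than half the main term. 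You instead bake the scale separation into the sets themselves ($\vert\theta_d - \theta_d^0\vert \le \veps^{3/2}$ on $A_2$, so the variance deviation is of strictly smaller order than the forced $\Vert\beta_2 - \beta_2^0\Vert \ge \veps/\sqrt{2}$), Taylor-expand the logit-normal success probability, and separate scales ($\veps$ vs.\ $\veps^{3/2}$ vs.\ $\veps^2$) by the reverse triangle inequality in empirical $L^2$. Your route is quantitative where the paper's is qualitative: it yields explicit orders in $\veps$ and dispenses with the auxiliary $\zeta$, at the price of having to control first and second derivatives of $p_k$ in $\theta_d$ (fine here, since $\theta_d$ stays in a compact subset of $(0,\infty)$ on $\bar{B}_\veps(\theta^0)$ for small $\veps$, but it is an extra smoothness verification the paper's continuity argument avoids). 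Your handling of $A_1$ is also cleaner than the paper's three-case analysis, since the exact decomposition of the average normal KL into $D(\theta_d) \ge 0$ plus the quadratic form in $\beta_1 - \beta_1^0$ lets each defining branch of $A_1$ supply its own lower bound. For point 2, the paper formally invokes Pollard's ULLN with covering-number and envelope conditions, which is the same finite-cover-plus-equicontinuity mechanism you sketch, built from the same Lipschitz envelopes (quadratic in $\vert Y_{i,i,1}\vert$ for the normal part, bounded for the Bernoulli part); and your point 3 argument is the paper's verbatim.
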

\begin{proof}
 A detailed proof is Appendix \ref{app:proofs}, we here give the proof idea. Let $A_2 = \partial B_\veps(\theta^0)\cap \{\theta : \vert \theta_d - \theta_d^0\vert \leq \zeta\}\cap\{\Vert \beta_2 - \beta_2^0\Vert \geq \veps /2\}$, for some small $\zeta > 0$. Let $A_1$ be the closure of $\partial B_\veps(\theta^0) \cap A_2^c$. The idea is that if $\zeta$ is small enough, so that $\theta_d \approx \theta_d^0$ and $\Vert \beta_2 - \beta_2^0 \Vert \geq \veps /2$ on $A_2$, then the distributions of $W^{(2)}$ implied by $\theta \in A_2$ and $\theta^0$ are different if $X = [x_{1, 1}, x_{2, 2}, \dots, x_{N, N}]^\tsp$ has full column rank. That is, $W^{(2)}$ should be able to distinguish every $\theta \in A_2$ from $\theta^0$. Moreover, one can show that on $A_1$ it holds either that $\vert \theta_d - \theta_d^0 \vert \geq \min(\zeta, \veps/4)$ or that $\Vert \beta_1 - \beta_1^0\Vert \geq \veps /4$. In either case, $W^{(1)}$ should be able to distinguish $\theta \in A_1$ from $\theta^0$. Formalizing this idea leads to point 1. Point 2 follows from checking the conditions of a uniform law of large numbers \citep[Theorem 8.2]{Pollard1990} and point 3 from points 1 and 2.
\end{proof}

The explicit construction of the subsets $A_1$ and $A_2$, as opposed to using Proposition \ref{prop:id_compact}, warrants an additional comment. Recall, the proposition gives compact $\tilde{A}_1$ and $\tilde{A}_2$ such that $\tilde{A}_1 \cup \tilde{A}_2 = \partial B_\veps(\theta^0)$ and $\nu_\theta^i \neq \nu_{\theta^0}^i$, $\theta \in \tilde{A}_i$, $i = 1, 2$. If one takes $A_i = \tilde{A_i}$, then point 1 in Lemma \ref{lem:lmm_id} follows. Moreover, when the subcollection components are i.i.d., this in turn leads to point 1 in Lemma \ref{lem:mglmm_id}, which is what is really needed. However, when the distributions of the subcollection components are not identical, this last implication is not true in general.

Having selected appropriate subcollections and subsets it remains only to check that the log-likelihood for the full data satisfies the regularity conditions in Assumptions \ref{A:Lipschitz} -- \ref{A:rates}. The following lemma verifies Assumption \ref{A:Lipschitz} and establishes a rate needed for the verification of Assumption \ref{A:rates}.

\begin{lem} \label{lem:mglmm_lipsh}
 If $\theta^0$ is an interior point of $\Theta$, then for every $n$ and small enough $\veps > 0$ there exists a random variable $K_n$ such that, $\pr$-almost surely,
 \[
  \sup_{\theta \in \bar{B}_\veps(\theta^0)} \Vert \nabla \Lambda_n(\theta; Y)\Vert \leq K_n = o_\pr(n^b),
 \]
 for some $b > 0$.
\end{lem}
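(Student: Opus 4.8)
The plan is to express the marginal score through the complete-data (joint) score via Fisher's identity, and then to reduce the bound to posterior moments of the random effects, which I would control using strong log-concavity of the posterior. Since $\Lambda_n(\theta; Y) = \log f_\theta^n(Y) - \log f_{\theta^0}^n(Y)$, we have $\nabla\Lambda_n(\theta; Y) = \nabla_\theta \log f_\theta^n(Y)$. Writing $f_\theta^n(y, u) = f_\theta^n(y\mid u)\phi_\theta^r(u)$ for the joint density of $(Y, U)$ and justifying differentiation under the integral in \eqref{eq:joint_dens} by dominated convergence (the joint density is smooth in $\theta$ with locally integrable dominating derivatives on $\bar B_\veps(\theta^0)$), Fisher's identity gives
\[
 \nabla_\theta\log f_\theta^n(y) = \E_\theta\!\left[\nabla_\theta\log f_\theta^n(Y, U)\mid Y = y\right].
\]
Taking norms and applying Jensen's inequality yields $\Vert\nabla\Lambda_n(\theta; Y)\Vert \leq \E_\theta[\Vert\nabla_\theta\log f_\theta^n(Y, U)\Vert\mid Y]$, so it suffices to bound the posterior expectation of the complete-data score uniformly over $\theta\in\bar B_\veps(\theta^0)$, where for small $\veps$ the coordinate $\theta_d$ stays bounded away from $0$ and above (as $\theta^0$ is interior and $\theta_d^0>0$).

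The complete-data score splits into three blocks. With $\sigma(t) = (1+e^{-t})^{-1}$, the $\beta_1$-block is $\sum_{i,j}(Y_{i,j,1}-\eta_{i,j,1})x_{i,j}$, the $\beta_2$-block is $\sum_{i,j}(Y_{i,j,2}-\sigma(\eta_{i,j,2}))x_{i,j}$, and the $\theta_d$-block, coming only from $\log\phi_\theta^r(U)$, is $-r/(2\theta_d) + \Vert U\Vert^2/(2\theta_d^2)$ with $r = 2N$. Since $\Vert x_{i,j}\Vert\leq 1$ and $\vert Y_{i,j,2}-\sigma(\eta_{i,j,2})\vert \leq 1$, the $\beta_2$-block is deterministically $O(N^2) = O(n)$. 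Collecting the predictors into $X_1$ (rows $x_{i,j}^\tsp$) and writing $\tilde Z$ for the $N^2\times 2N$ incidence matrix with $(\tilde Z u)_{i,j} = u_i^{(1)}+u_j^{(2)}$, the $\beta_1$-block equals $X_1^\tsp(Y_{\cdot 1}-X_1\beta_1-\tilde Z U)$, so its posterior expectation is controlled by $\Vert X_1\Vert_{\mathrm{op}} = O(N)$, $\Vert\tilde Z\Vert_{\mathrm{op}} = O(\sqrt N)$, $\Vert Y_{\cdot1}\Vert = O_\pr(N)$ (from the finite moments of the marginally normal responses), and the posterior moments $\E_\theta[\Vert U\Vert\mid Y]$ and $\E_\theta[\Vert U\Vert^2\mid Y]$; the $\theta_d$-block reduces to $\E_\theta[\Vert U\Vert^2\mid Y]$. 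Everything therefore hinges on a polynomial-in-$n$ bound for the posterior second moment of $U$.

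The main obstacle is precisely this posterior moment, which has no closed form because of the logistic terms, and I would resolve it using that the posterior of $U$ given $Y$ is strongly log-concave. As a function of $u$, the joint log-density is the sum of a concave quadratic (normal responses), a concave function (the logistic log-likelihood), and the Gaussian log-prior $-\Vert u\Vert^2/(2\theta_d)$, so its Hessian in $u$ is $\preceq -\theta_d^{-1}I_r$; the posterior is thus $\theta_d^{-1}$-strongly log-concave, with curvature bounded below by some $\kappa>0$ uniformly on $\bar B_\veps(\theta^0)$. The Brascamp--Lieb inequality then gives $\mathrm{tr}\,\var_\theta(U\mid Y) \leq r/\kappa = O(N)$. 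For the posterior mean $\mu_\theta = \E_\theta[U\mid Y]$, the first-order identity $\E_\theta[\nabla_u\log f_\theta^n(Y, U)\mid Y] = 0$ (integration by parts for the posterior) yields
\[
 (\tilde Z^\tsp\tilde Z + \theta_d^{-1}I_r)\,\mu_\theta = \tilde Z^\tsp(Y_{\cdot1}-X_1\beta_1) + \tilde Z^\tsp\E_\theta[Y_{\cdot2}-\sigma(\eta_{\cdot 2})\mid Y].
\]
Since the rightmost term has entries bounded by one and the matrix has smallest eigenvalue at least $\theta_d^{-1}$, we obtain $\Vert\mu_\theta\Vert = O_\pr(N^{3/2})$ uniformly on the ball, whence $\E_\theta[\Vert U\Vert^2\mid Y] = \Vert\mu_\theta\Vert^2 + \mathrm{tr}\,\var_\theta(U\mid Y) = O_\pr(N^3)$.

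Substituting these bounds back, each block of the posterior-averaged complete-data score is $O_\pr(N^3) = O_\pr(n^{3/2})$, uniformly over $\theta\in\bar B_\veps(\theta^0)$, since the bounding coefficients are continuous in $\theta$ on the compact ball and hence uniformly bounded. Taking $K_n = \sup_{\theta\in\bar B_\veps(\theta^0)}\Vert\nabla\Lambda_n(\theta; Y)\Vert$ then gives $K_n = O_\pr(n^{3/2})$, which is $o_\pr(n^b)$ for any $b > 3/2$, as required. The technical heart of the argument is the uniform polynomial control of the posterior moments of the random effects in the absence of a closed-form marginal; the strong log-concavity afforded by the Gaussian prior, combined with the first-order characterization of the posterior mean, is what makes this tractable despite the intractable logistic integrals.
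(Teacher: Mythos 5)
Your proof is correct (modulo the standard justification of differentiation under the integral sign, which you flag), but the key technical device differs from the paper's. Both arguments share the same starting point: representing the marginal score as a posterior expectation of the complete-data score. Indeed, the paper uses exactly this representation in its appendix when deriving envelopes in the proof of Lemma \ref{lem:mglmm_id}, via $f_\theta(y\mid u)f_\theta(u)/f_\theta(y) = f_\theta(u\mid y)$, and its supplementary proof of Lemma \ref{lem:mglmm_lipsh} proceeds in that direct, model-specific style. Where you depart is in controlling the intractable posterior moments of $U$: you exploit that the posterior of $U$ given $Y$ is $\theta_d^{-1}$-strongly log-concave (concave quadratic from the normal responses, plus a concave logistic term, plus the Gaussian prior), apply Brascamp--Lieb to get $\tr\,\var_\theta(U\mid Y) = O(N)$, and use the stationarity identity $\E_\theta[\nabla_u\log f_\theta^n(Y,U)\mid Y] = 0$ to bound the posterior mean. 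This is elegant and portable: it would apply essentially verbatim to any MGLMM whose conditional log-likelihood is concave in $u$, whereas direct manipulation of ratios of integrals is more tied to the particular model. The price is a coarser rate, $K_n = O_\pr(n^{3/2})$, versus the $b = 1 + \epsilon$ the paper reports; this is harmless since the lemma only demands some $b > 0$, and it feeds into Theorem \ref{thm:mglmm} unchanged because the identification rate is exponential in $N$. In fact, your own argument yields the sharper rate with one extra observation: lower bounding $\emin(\tilde Z^\tsp\tilde Z + \theta_d^{-1}I_r)$ by $\theta_d^{-1}$ discards the design entirely. Since $\tilde Z[1_N^\tsp, -1_N^\tsp]^\tsp = 0$, the right-hand side of your equation for $\mu_\theta$ is orthogonal to the null space of $\tilde Z^\tsp\tilde Z$, so $\mu_\theta$ lies in the orthogonal complement, where the nonzero eigenvalues of $\tilde Z^\tsp\tilde Z$ are at least $N$; hence $\Vert\mu_\theta\Vert = O_\pr(N^{1/2})$, $\E_\theta[\Vert U\Vert^2\mid Y] = O_\pr(N)$, and every block of the posterior-averaged score becomes $O_\pr(N^2) = O_\pr(n)$, giving $K_n = o_\pr(n^{1+\epsilon})$ for every $\epsilon > 0$ --- exactly the rate the paper obtains.
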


Upon inspecting the proof (supplementary material \citep{Ekvall2019_suppl}) one sees that $b$ can be taken to be $1 + \epsilon$, for any $\epsilon > 0$. This is a better (slower) rate than that obtained in the linear mixed model (see the proof of Lemma \ref{lem:lmm_lipsh}). We are now ready to state the main result of the section.

\begin{thm} \label{thm:mglmm}
 If $\theta^0$ is an interior point of $\Theta$ and
 \begin{equation*}
  \liminf_{N \to \infty} \emin\left(N^{-1}\sum_{i = 1}^N x_{i, i}x_{i, i}^\tsp \right) > 0,
 \end{equation*} then, $\pr$-almost surely, there exists a sequence $\hat{\theta}_n$ of roots to the likelihood equations $\nabla \Lambda_n(\theta; Y) = 0$ such that $\hat{\theta}_n \ipto \theta^0$.
\end{thm}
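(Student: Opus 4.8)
The plan is to verify the hypotheses of Theorem \ref{thm:cramer} with $\Theta$ replaced by $\bar B_\veps(\theta^0)$, following the proof of Theorem \ref{thm:lmm} essentially verbatim. First I would fix an arbitrary small $\veps > 0$; since $\theta^0$ is interior, $\veps$ can be taken small enough that every point of $\bar B_\veps(\theta^0)$ is interior to $\Theta$. The log-likelihood $\ell_n(\theta; Y) = \log f^n_\theta(Y)$ is $\pr$-almost surely differentiable on this ball by differentiation under the integral sign defining the marginal density, hence so is $\Lambda_n(\theta; Y) = \ell_n(\theta; Y) - \ell_n(\theta^0; Y)$, which supplies the differentiability required by Theorem \ref{thm:cramer}. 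The eigenvalue condition assumed in the theorem is exactly the hypothesis of Lemma \ref{lem:mglmm_id}, so that lemma delivers subsets $A_1, A_2$ with $A_1 \cup A_2 = \partial B_\veps(\theta^0)$, each $A_i$ identified by $W^{(i)}$ at an exponential rate $a_{n,i} = o(e^{-\epsilon N})$; this verifies Assumption \ref{A:identify} on $\bar B_\veps(\theta^0)$. Likewise, Lemma \ref{lem:mglmm_lipsh} bounds $\sup_{\theta \in \bar B_\veps(\theta^0)}\Vert \nabla \Lambda_n(\theta; Y)\Vert$ by a random variable $K_n = o_\pr(n^b)$, and the mean value theorem then makes $\Lambda_n(\theta; Y)$ $K_n$-Lipschitz on each $A_i$, verifying Assumption \ref{A:Lipschitz}.

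It then remains only to align the rates for Assumption \ref{A:rates}. Since $A_i \subseteq \partial B_\veps(\theta^0)$, a sphere in $\mathbb R^d$, its $\delta$-covering number is $O([\veps/\delta]^{d-1})$ as $\delta \to 0$ \citep[][Lemma 1]{Bronshteyn1975}. I would take $\delta_{n,i} = n^{-b}$, which gives $M_{n,i} = O(n^{(d-1)b})$ and, by Lemma \ref{lem:mglmm_lipsh}, $K_{n,i}\delta_{n,i} = o_\pr(n^b)\cdot n^{-b} = o_\pr(1)$, the first rate condition. For the second, $M_{n,i} a_{n,i} = O(n^{(d-1)b} e^{-\epsilon N})$; because $n = 2N^2$ is polynomial in $N$, the exponential factor dominates and $M_{n,i}a_{n,i} \to 0$. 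With Assumptions \ref{A:identify}--\ref{A:rates} verified on $\bar B_\veps(\theta^0)$ for all small enough $\veps$, Theorem \ref{thm:cramer} yields the claimed sequence of consistent roots to $\nabla \Lambda_n(\theta; Y) = 0$.

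The substantive difficulty is packed entirely into Lemma \ref{lem:mglmm_id}, which I am taking as given: there the subcollections are independent but not identically distributed, so identification cannot be routed through the i.i.d. machinery of Proposition \ref{prop:id_compact}, and the sets $A_1, A_2$ must be constructed by hand so that $W^{(2)}$ separates every $\theta$ with $\Vert \beta_2 - \beta_2^0\Vert$ bounded below while $W^{(1)}$ handles the remaining directions, with the uniform law of large numbers supplied by empirical process theory rather than a classical strong law. Granting that lemma, the theorem proper is largely bookkeeping, and the single point I would flag as worth noting is that, in contrast to the longitudinal model of Theorem \ref{thm:lmm}, no analogue of the condition $T = O(N^k)$ is needed here. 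This is precisely because the total sample size $n = 2N^2$ is automatically polynomial in the number of independent subcollection components $N$, so the exponential identification rate dominates the polynomial covering number with no further restriction.
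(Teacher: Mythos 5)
Your proposal is correct and follows essentially the same route as the paper's own proof: both verify Theorem \ref{thm:cramer} on $\bar{B}_\veps(\theta^0)$ by invoking Lemma \ref{lem:mglmm_id} for Assumption \ref{A:identify}, Lemma \ref{lem:mglmm_lipsh} (with the mean value theorem) for Assumption \ref{A:Lipschitz}, and the sphere-covering bound with $\delta_{n,i} = n^{-b}$ so that the exponential identification rate in $N$ dominates the polynomial covering number since $n = 2N^2$. Your closing observation --- that no analogue of the $T = O(N^k)$ condition is needed because $n$ is automatically polynomial in $N$ --- is exactly the reason the paper's rate verification closes without further hypotheses.
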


\begin{proof}
 The proof is similar to that of Theorem \ref{thm:lmm} so we skip some details. We may assume all points in $\bar{B}_\veps(\theta^0)$ are interior points of $\Theta$.  As is proven in the supplementary material \citep{Ekvall2019_suppl}, $\Lambda_n(\theta; Y)$ is differentiable on $\bar{B}_\veps(\theta^0)$. By Lemma \ref{lem:mglmm_id}, the identification rate is exponentially fast in $N$ and Lemma \ref{lem:mglmm_lipsh} shows that $\Lambda_n(\theta; Y)$ is $K_n$-Lipschitz on both $A_1$ and $A_2$, and that $K_n = o_\pr(n^b)$ for some $b > 0$. This verifies Assumption \ref{A:Lipschitz}. By picking $\delta_{i, n} = n^{-b}$ we can have $M_{n, i} = O(n^{[d - 1]b})$ as $n \to \infty$, $i = 1, 2$. Thus, $K_n\delta_n = o_\pr(1)$ and $M_{n, i}a_{n, i} = O(N^{2b[d - 1]}e^{-\epsilon N})$ for some $\epsilon > 0$, which is $o(1)$ as $N \to \infty$ since $n = 2N^2$.
\end{proof}

\section{Discussion} \label{sec:disc}

Our theory develops the current state-of-the-art asymptotic theory based on subcollections to cover more general cases. The assumptions we make highlight what makes the use of subcollections work. In particular, the interplay between the identification rates of subcollections and the regularity of the likelihood function for the full data is made precise. We note that when the subcollections consist of $m \in \{1, 2, \dots\}$ independent random variables, as in our examples, then if $n = o(m^b)$ for some $b > 0$ and $\nabla \Lambda_n(\theta; Y) = o_\pr(n^{b'})$ for some $b' > 0$, uniformly on a compact $\Theta$, the rate conditions are satisfied. This is so because, under regularity conditions, the identification rate in a subcollection with $m$ independent random variables is exponential in $m = n^{1/b}$. Since this argument works for arbitrarily large $b$ and $b'$ our theory is operational in a wide range of models. Loosely speaking, if the score function is of less than exponential order in the sample size and there are subcollections of independent random variables that grow faster than logarithmically in the sample size, the MLE is consistent. The conditions should be verifiable in many models since they often require only standard asymptotic tools. For example, in the LMM example nothing more than a uniform law of large numbers and strict positivity of the K--L divergence between distributions corresponding to distinct, identified parameters is needed. Though not pursued here, by inspecting the assumptions of our theory one also sees that it has the potential to be extended to allow the dimension of the parameter set, $d$, grow with $n$. The rates required in our assumptions could be satisfied also if $d$ grows, at least if at a slow enough rate. The wide applicability of empirical process theory, which we use in the second application, also suggests that it may be possible to verify our conditions in yet more complicated models.

Consistency of MLEs has not previously been established in either of the two models to which we apply the general theory. In particular, previous work on asymptotic theory for MLEs in mixed models often either assumes independent replications of a response vector, that there are no predictors, or no mixed-type responses. We have tried to keep the models here as simple as possible while still illustrating key ideas. Crossed random effects, temporal dependence, and predictors are included because they are challenging theoretically and are commonly used in practice. We have refrained from including things that do not require any new methods but make ideas less transparent. For example, it would be straightforward to include random effects that are not crossed, possibly at the expense of using more subcollections or subcollections consisting of independent vectors of larger dimension than what is now necessary. Similarly, adding several crossed random effects does not make things much harder, only less transparent.

Avenues for future research includes the rate of convergence of the MLEs as well as their asymptotic distribution. Intuitively, one expects MLEs based on the full data to converge at least as fast as the slowest of the subcollection MLEs, that is, the estimators one gets from using only a subset of the full data. There is some evidence of this, namely that, under regularity conditions, the Fisher information in the full data is always larger than that in any subcollection \cite{Jiang2013}. On the other hand, it is easy to show that, for the simple LMM example in the introduction, the full data MLE converges at the same rate as that based on a subcollection of $N = \sqrt{n}$ i.i.d. observations; that is, at the rate $n^{1/4}$. Given the similarities of the random effect structures, that convergence rate may in future work be a reasonable working hypothesis for MLEs in the MGLMM considered here.

\newpage
\bibliographystyle{abbrv}
\bibliography{../subset_mglmm.bib}
\newpage

\appendix

\section{Proofs} \label{app:proofs}
\begin{proof}[Proof of Lemma \ref{lem:consist}]
 Fix some arbitrary $\veps > 0$. If $\sup_{\theta \in A_i}L_n(\theta; Y) < 1$ for $i = 1, \dots, s$, then, since $L_n(\theta^0; Y) = 1$, there are no global maximizers in $\cup_{i = 1}^s A_i \supseteq \Theta \cap B_\veps(\theta^0)^c$. Thus, it suffices to prove $$\pr\left(\bigcup_{i = 1}^s \left\{\sup_{\theta \in A_i} L_n(\theta; Y) \geq 1 \right\}\right)\leq \sum_{i = 1}^ s \pr\left(\sup_{\theta \in A_i} L_n(\theta; Y) \geq 1 \right)\to 0. $$
 Since $s$ is fixed it is enough that $\pr \left( \sup_{\theta \in A_i} L_n(\theta; Y) \geq 1\right) \to 0$ for every $i = 1, \dots, s$. Without loss of generality, consider $i = 1$. Pick a cover of $A_1$ as given by Assumption 3 and, for every ball in the cover, pick a $\theta^j$ in the intersection of that ball with $A_1$. If there are some balls that do not intersect $A_1$, they may be discarded from the cover, so we assume without loss of generality that all balls do intersect $A_1$. We then get $M_{n, 1}$ points such that every point in $A_1$ is within $\delta_{n, 1}$ of at least one of them. For any $\theta \in A_1$, let $\theta^j(\theta)$ denote the $\theta^j$ closest to it (pick an arbitrary one if there are many). Using the Lipschitz continuity given by Assumption 2 and that $x\mapsto e^x$ is increasing we have,
 \begin{align*}
  \pr\left(\sup_{\theta \in A_1} L_n(\theta; Y) \geq 1 \right) & = \pr\left(\sup_{\theta \in A_1} \Lambda_n(\theta; Y) \geq 0 \right) \\
   & = \pr\left(\sup_{\theta \in A_1} \ell_n(\theta; Y) \geq \ell_n(\theta^0; Y) \right)
 \end{align*}
 which is upper bounded by
 \begin{align*}
   \pr\left(\sup_{\theta \in A_1} \left[ \ell_n(\theta^j(\theta); Y) + K_{n, 1}d_\mathcal T(\theta, \theta^j(\theta))\right] \geq \ell_n(\theta^0; Y) \right).
 \end{align*}
 Because there are only $M_{n,1}$ points $\theta^j$, and $d_\mathcal T(\theta^j(\theta), \theta) \leq \delta_{n, 1}$ since $\theta^j(\theta)$ is the one closest to $\theta$, we get that the last inline equation is upper bounded by
 \begin{align*}
  & \pr\left(\max_{j \leq M_{n, 1}} f_{\theta^j}(Y) e^{K_{n, 1}\delta_{n, 1}} \geq f_{\theta^0}(Y)\right)\\
   &\leq \pr\left(2 \max_{j \leq M_{n, 1}} f_{\theta^j}(Y) \geq f_{\theta^0}(Y)\right) +\pr\left( e^{K_{n, 1}\delta_{n, 1}} \geq 2\right) \\
  & =  \pr\left(2 \max_{j \leq M_{n, 1}} f_{\theta^j}(Y) \geq f_{\theta^0}(Y)\right) + o(1)
 \end{align*}
 where the last line uses Assumption \ref{A:rates}. The remaining term, $$\pr\left(2 \max_{j \leq M_{n, 1}} f_{\theta^j}(Y) \geq f_{\theta^0}(Y)\right) = \pr\left(\max_{j \leq M_{n, 1}}L_n(\theta^j; Y) \geq 1/2 \right),$$ we will deal with using Lemma \ref{lem:subset} and dominated convergence. After conditioning on $W^{(1)}$ we have
 \begin{align*}
\pr\left(\max_{j \leq M_{n, 1}}L_n(\theta^j; Y) \geq 1/2 \mid W^{(1)} \right) & \leq \sum_{i = 1}^{M_{n, 1}} 2 L_{m_1}(\theta^j; W^{(1)}) \\
& \leq 2 M_{n, 1} \sup_{\theta \in A_1}L_{m_1}(\theta, W^{(1)}),
 \end{align*}
 $\pr$-almost surely, where the first inequality is by subadditivity and Lemma \ref{lem:subset}, and the second uses that $L_n(\theta^j; W^{(1)}) \leq \sup_{\theta \in A_1}L_{m_1}(\theta; W^{(1)})$ by definition. The expression in the last line vanishes as $n \to \infty$ by Assumption 3. Thus,
$$\pr\left(\max_{j \leq M_{n, 1}}L_n(\theta^j; Y)\geq 1/2 \right) \to 0$$
by dominated convergence. The dominating function can be the constant 1. This finishes the proof.
\end{proof}

Let $\mathrm{C}(\delta, G, \Vert \cdot \Vert)$ denote the $\delta$-covering number of the set $G$ under the distance associated with the norm $\Vert \cdot \Vert$. We will use the following result due to Pollard \citep[Theorem 8.2]{Pollard1990}, here stated in terms of covering numbers instead of packing numbers.

\begin{lem} \label{lem:pollard}
 Let $h_1(\omega, \theta), h_2(\omega, \theta), \dots$, $\theta \in A \subseteq \Theta$, be independent processes with integrable envelopes $H_1(\omega), H_2(\omega)$, \dots, meaning $\vert h_i(\omega, \theta)\vert \leq H_i(\omega)$, for all $i$ and $\theta \in A$. Let $H = (H_1, \dots, H_N)$ and \[\mathcal{H}_{N, \omega} = \{[h_1(\omega, \theta), \dots, h_N(\omega, \theta)] \in \mathbb R^N: \theta \in A\}.\] If for every $\epsilon > 0$ there exists a $K > 0$ such that
 \begin{enumerate}
  \item $N^{-1}\sum_{i = 1}^N \E [H_i I(H_i > K)] < \epsilon$ for all $N$, and
  \item $\log \mathrm{C}(\epsilon \Vert H \Vert_1, \mathcal H_{N, \omega}, \Vert \cdot \Vert_1) = o_\pr(N)$ as $N \to \infty$,
 \end{enumerate}
 then $$\sup_{\theta \in A} N^{-1}\left\vert\sum_{i = 1}^N h_i(\omega, \theta) - \E(h_i(\omega, \theta))\right\vert \ipto 0.$$
\end{lem}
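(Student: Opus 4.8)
The plan is to follow the classical truncation--symmetrization--covering route for uniform laws of large numbers, adapting it to independent but non-identically distributed summands. Fix an arbitrary $\epsilon > 0$ and let $K$ be the constant furnished by hypotheses 1 and 2. I would first truncate: writing $h_i^K(\omega, \theta) = h_i(\omega, \theta) I(H_i(\omega) \le K)$, the triangle inequality bounds the target supremum by
\[
 \sup_{\theta \in A} N^{-1}\Big\vert \sum_{i = 1}^N (h_i^K - \E h_i^K)\Big\vert + N^{-1}\sum_{i = 1}^N H_i I(H_i > K) + N^{-1}\sum_{i = 1}^N \E[H_i I(H_i > K)],
\]
because $\vert h_i - h_i^K\vert \le H_i I(H_i > K)$ uniformly in $\theta$. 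The last term is below $\epsilon$ for all $N$ by hypothesis 1, and the middle term converges in probability to its mean, which is also below $\epsilon$, by Chebyshev's inequality since the summands $H_i I(H_i > K)$ are bounded by $K$. It therefore suffices to prove the uniform weak law for the bounded, truncated processes $h_i^K$, which take values in $[-K, K]$.

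The second step is symmetrization, which is the decisive ingredient here. Introducing Rademacher signs $\sigma_1, \dots, \sigma_N$ independent of the data, the standard symmetrization inequality reduces control of $\sup_\theta N^{-1}\vert \sum_i (h_i^K - \E h_i^K)\vert$ to control of the symmetrized process $\sup_\theta N^{-1}\vert \sum_i \sigma_i h_i^K\vert$, up to a factor of two. The reason this step is essential, rather than merely convenient, is that the cover provided by hypothesis 2 controls the \emph{empirical} $\ell_1$ distance $N^{-1}\sum_i \vert h_i(\omega, \theta) - h_i(\omega, \theta')\vert$ at the realized $\omega$, not the modulus of continuity of the deterministic mean $\theta \mapsto N^{-1}\sum_i \E h_i^K(\theta)$. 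After symmetrization one only needs to approximate the random process along this data-dependent net, so no smoothness of the mean function is required.

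The third step is a conditional covering argument. Working conditionally on the data $\omega$, hypothesis 2 supplies a net of parameter values of cardinality $\mathrm{C}(\epsilon \Vert H\Vert_1, \mathcal H_{N, \omega}, \Vert \cdot\Vert_1)$ such that every $\theta \in A$ has a net point $\theta_k$ with $\sum_i \vert h_i(\omega, \theta) - h_i(\omega, \theta_k)\vert \le \epsilon \Vert H\Vert_1$. Since truncation does not increase coordinatewise differences and the Rademacher signs have modulus one, the symmetrized average varies by at most $\epsilon N^{-1}\Vert H\Vert_1 = O_\pr(\epsilon)$ across a net cell. At each net point, $N^{-1}\sum_i \sigma_i h_i^K(\omega, \theta_k)$ is an average of independent, mean-zero variables bounded by $K$, so Hoeffding's inequality gives, for fixed $t > 0$, a conditional tail bound of order $\exp(-N t^2 / (2K^2))$; a union bound over the net multiplies this by the covering number. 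On the high-probability event where $\log \mathrm{C}(\epsilon \Vert H\Vert_1, \mathcal H_{N, \omega}, \Vert\cdot\Vert_1) \le \delta N$ (available because the log covering number is $o_\pr(N)$) and $N^{-1}\Vert H\Vert_1$ is bounded, the exponential decay dominates the union-bound cost, so the maximum over the net is $o_\pr(1)$. Combining the net variation with this maximum, unconditioning, and then letting $\epsilon \to 0$ yields $\sup_{\theta \in A} N^{-1}\vert \sum_i (h_i^K - \E h_i^K)\vert \ipto 0$, and hence the conclusion.

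The main obstacle is the bookkeeping in the third step caused by the covering number being data dependent and controlled only in probability: the Hoeffding estimate and the union bound must be executed conditionally on $\omega$, restricted to the high-probability event on which both $\log \mathrm{C}(\cdot)$ and $N^{-1}\Vert H\Vert_1$ are suitably bounded, and only then unconditioned via bounded convergence. One must also verify that truncation and the sign flips genuinely leave the relevant $\ell_1$ distances no larger, so that a single net governs the symmetrized process. Measurability of the suprema is a further technicality, which I would either import from the cited reference or circumvent by passing to a countable dense subset of $A$; these points are handled in Pollard's original treatment, which could alternatively be invoked directly.
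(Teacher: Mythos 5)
The paper does not actually prove this lemma: it is imported, with packing numbers restated as covering numbers, from Pollard (1990, Theorem 8.2), so there is no internal proof to compare against. What you have written is, in outline, the standard proof of that cited theorem --- truncation at the envelope level, Rademacher symmetrization, and a conditional Hoeffding bound with a union bound over the data-dependent $\ell_1$ net, handling the $o_\pr(N)$ entropy hypothesis by restricting to a high-probability event and using boundedness of the truncated process to uncondition. That architecture is correct, and your two structural observations are exactly right: symmetrization is needed because hypothesis 2 controls the empirical $\ell_1$ distance at the realized $\omega$ rather than any smoothness of the mean function, and the same net serves the truncated, sign-flipped process because truncation by $I(H_i \le K)$ is $\theta$-free and hence cannot increase coordinatewise differences.

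Two steps are misjustified as written, though both are repairable. First, you claim $N^{-1}\sum_{i=1}^N H_i I(H_i > K)$ converges in probability to its mean ``by Chebyshev's inequality since the summands $H_i I(H_i > K)$ are bounded by $K$.'' They are not: these summands are precisely the unbounded tails of the envelopes (it is $h_i^K$, not the discarded part, that is bounded by $K$), and the hypotheses supply only first moments of the $H_i$, so no variances are available for Chebyshev. The step survives via Markov instead: $\pr\left(N^{-1}\sum_{i=1}^N H_i I(H_i > K) > \delta\right) \le \epsilon/\delta$ by hypothesis 1, which is all the conclusion requires. Second, you use a single $\epsilon$ as both the truncation parameter and the covering radius and then ``let $\epsilon \to 0$.'' The net-variation term is $\epsilon N^{-1}\Vert H \Vert_1$, and $N^{-1}\E\Vert H\Vert_1 \le K(\epsilon) + \epsilon$, so this term is really $O_\pr(\epsilon K(\epsilon))$, not $O_\pr(\epsilon)$; since $K(\epsilon)$ may blow up as $\epsilon \to 0$, the product need not vanish. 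The fix is to decouple the two scales: fix the truncation level $K = K(\epsilon_1)$ first, then invoke hypothesis 2 at a covering radius $\epsilon_2$ chosen small relative to $1/K(\epsilon_1)$, which is legitimate because hypothesis 2 holds at every radius. With those two corrections your argument is complete.
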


\begin{proof}[Proof of Lemma \ref{lem:mglmm_id}]
 Let us first prove that, given $\veps > 0$, there exists a $\zeta> 0$, and hence $A_i = A_i(\veps, \zeta)$, $i = 1, 2$, such that point 1 in the lemma holds. The definition of $A_i(\veps, \zeta)$ is as in the main text. Let $c(t) = \log(1 + e^t)$ denote the cumulant function in the conditional distribution of $Y_{i, i, 2}$ given the random effects and define
 \begin{align*}
  p_i(\beta_2, \theta_d) & = \E\left[c'\left(x_{i, i}^\tsp \beta_2 + \sqrt{\theta_d / \theta_d^0}\left(U_{i}^{(1)} + U^{(2)}_j\right)\right)\right].
 \end{align*}
 Recall, $\E$ denotes expectation with respect to the distributions indexed by $\theta^0$, so $p_i(\beta_2, \theta_d)$ is the success probability of $Y_{i, i, 2}$ when $\beta_2$ and $\theta_d$ are the true parameters.

 Note that because the components in $W^{(2)}$ are independent, we can write $\E [\Lambda_N(\theta; W^{(2)})]$ as a sum of $N$ terms, each summand being the negative K--L divergence between two Bernoulli variables with parameters $p_i(\beta_2, \theta_d)$ and $p_i(\beta_2^0, \theta_d^0)$. Thus (see the supplementary material \citep{Ekvall2019_suppl}),
 \begin{align*}
  N^{-1}\E[ \Lambda_N(\theta; W^{(2)})] \leq  -2N^{-1}\sum_{i = 1}^N [p_i(\beta_2, \theta_d) - p_i(\beta_2^0, \theta_d^0)]^2
\end{align*}
which one can show is upper bounded by
\begin{align}\label{eq:KL_bound_2}
   -2\left[ N^{-1}\sum_{i = 1}^N \vert p_i(\beta_2, \theta_d) - p_i(\beta_2, \theta_d^0)\vert - N^{-1}\sum_{i = 1}^N\vert p_i(\beta_2^0, \theta_d^0) - p_i(\beta_2, \theta_d^0) \vert \right]^2.
 \end{align}
 Let us work separately with the averages in the last line. We will show that the second can be made arbitrarily small on $A_2$ by selecting $\zeta$ small enough, and that the first is bounded away from zero on the same $A_2$, leading to an asymptotic upper bound on $\sup_{\theta \in A_2}N^{-1}\E[\Lambda_N(\theta; W^{(2)})]$ away from zero. We start with the first average.

 Let $H$ be a compact subset of $\mathbb R$ such that $x_{i, i}^\tsp \beta_2 \in H$ for all $i$ and $\theta \in \bar{B}_\veps(\theta^0)$. Such $H$ exists because the predictors are bounded and $\beta_2$ is bounded on $\bar{B}_\veps(\theta^0)$. Then, defining $\tilde{p}_i(\gamma, \theta_d)$ as $p_i(\beta_2, \theta_d)$ but with $x_{i, i}^\tsp \beta_2$ replaced by $\gamma$, we get
 \begin{align*}
  \sup_{\theta \in A_2} \vert p_i(\beta_2, \theta_d) - p_i(\beta_2, \theta_d^0)\vert
   & \leq \sup_{\theta \in A_2} \sup_{\gamma \in H} \vert \tilde{p}_i(\gamma, \theta_d) - \tilde{p}_i(\gamma, \theta_d^0) \vert.
 \end{align*}
 Since the random variable in the expectation defining $\tilde{p}_i$ is bounded by 1 (it is the mean of a Bernoulli random variable), $\tilde{p}_i$ is continuous by dominated convergence. Thus, since $H$ is compact, $\sup_{\gamma \in H}\vert \tilde{p}_i(\gamma, \theta_d) - \tilde{p}_i(\gamma, \theta_d^0) \vert$ is continuous in $\theta_d$. That is, we can make $\sup_{\gamma \in H}\vert \tilde{p}_i(\gamma, \theta_d) - \tilde{p}_i(\gamma, \theta_d^0) \vert$ arbitrarily small on $A_2 = A_2(\zeta, \veps)$ by picking $\zeta$ small enough, which is what we wanted to show. We next work with the second average in \eqref{eq:KL_bound_2}.

 By the mean value theorem, for some $\tilde{\beta}_{2, i}$ between $\beta_2$ and $\beta^0_2$, $\vert p_i(\beta_2^0, \theta_d^0) - p_i(\beta_2, \theta_d^0)\vert = \vert \E(c''(x_{i, i}^\tsp \tilde{\beta}_{2, i} + U^{(2)}_i + U^{(2)}_j))x_{i, i}^\tsp(\beta_2 - \beta_2^0)\vert$. Here, differentiation under the expectation is permissible since $c''$ is the variance of a Bernoulli random variable, hence bounded by $1/4$, and $ \vert x_{ii}^{\tsp}(\beta_2 - \beta_2^0)\vert \leq \Vert x_{i, i}\Vert \Vert \beta_2 - \beta_2^0\Vert^2 \leq \veps$ on $\bar{B}_\veps(\theta^0)$. By the same bound on $c''$ we get that $\E(c''(\gamma + U^{(1)}_i + U^{(2)}_j))$ is continuous in $\gamma$. Thus, $\inf_{\gamma \in H} \E(c''(\gamma + U^{(1)}_i + U^{(2)}_j)) \geq c_1 > 0$. That $c_1$ must be positive follows from that $c''$ is strictly positive on all of $\mathbb R$. We have thus proven that $\vert p_i(\beta_2^0, \theta_d^0) - p_i(\beta_2, \theta_d^0)\vert \geq c_1 \vert x_i^\tsp(\beta_2 - \beta_2^0)\vert$, uniformly on $\bar{B}_\veps(\theta^0)$. Using this and that $\vert x_{i, i}^\tsp(\beta_2 - \beta_2^0)\vert \leq \Vert x_{i, i}\Vert \Vert \beta_2 - \beta_2^0\Vert \leq \veps \leq 1$ so that squaring it makes it smaller,
 \begin{align*}
  N^{-1}\sum_{i = 1}^N\vert p_i(\beta_2^0, \theta_d^0) - p_i(\beta_2, \theta_d^0) \vert & \geq c_1 N^{-1}\sum_{i = 1}^N \vert x_{i, i}^\tsp (\beta_2 - \beta_2^0)\vert\\
  & \geq c_1N^{-1}  (\beta_2 - \beta_2^0)^\tsp \left(\sum_{i = 1}^N x_{i, i }x_{i, i}^\tsp\right) (\beta_2 - \beta_2^0) \\
  & \geq c_1 \Vert \beta_2 - \beta_2^0\Vert^2 N^{-1}\emin\left(\sum_{i = 1}^N x_{i, i }x_{i, i}^\tsp\right)
 \end{align*}
 which lower limit as $N \to \infty$ is bounded below by some strictly positive constant, say $c_2$, since $\liminf_{N \to \infty} N^{-1}\emin\left( \sum_{i = 1}^N x_{i, i}x_{i, i}^\tsp\right) \geq c_3 > 0$, for some $c_3$, and $\Vert \beta_2 - \beta_2^0\Vert \geq \veps / 2 > 0 $ on $A_2$. To summarize, we may pick $\zeta$ so small that the second average in \eqref{eq:KL_bound_2} is less than $ c_2/2$, say, and hence get $\sup_{\theta \in A_2}N^{-1} \E[\Lambda_N(\theta; W^{(2)})] \leq -2 (c_2 - c_2/2)^2 < 0$, for all but at most finitely many $N$. This proves point 1 as it pertains to $A_2$.

 Consider next $$A_1 = \partial B_\veps(\theta^0) \cap \left( \{\theta: \vert \theta_d - \theta_d^0\vert \geq \zeta\} \cup \{\theta: \Vert\beta_2 - \beta_2^0\Vert \leq \veps/2\}\right)$$
 and $W^{(1)}$. Similarly to for $W^{(2)}$, $\E[\Lambda_N(\theta; W^{(1)})]$ can due to independence be written as a sum of $N$ terms in the form
 \begin{align} \label{eq:norm_KL_neg}
 -\frac{1}{2}\left[\log\left(\frac{1 + 2\theta_d}{1 + 2\theta_d^0}\right) + \frac{1 + 2\theta_d^0 + [x_i^{\tsp}(\beta_2 - \beta_2^0)]^2}{1 + 2\theta_d}  - 1 \right],
 \end{align}
 which is the negative K--L divergence between two univariate normal distributions. Let us consider the possible values this can take for $\theta \in A_1$. If $\vert \theta_d - \theta_d^0\vert \geq \zeta$, then \eqref{eq:norm_KL_neg} is upper bounded by what is obtained when $\beta_1 = \beta_1^0$. This in turn is a continuous function in $\theta_d$ and hence attains its supremum on the compact set $\{\theta_d : \zeta \leq \vert \theta_d - \theta_d^0\vert \leq \veps\}$, and hence on $A_1$. This supremum is strictly positive because the divergence can be zero only if $\theta_d = \theta_d^0$. If instead $\Vert \beta_2 - \beta_2^0\Vert \leq \veps / 2$. Then either $\vert \theta_d - \theta_d^0\vert \geq \veps / 4$ or $\Vert \beta_1 - \beta_1^0\Vert \geq \veps / 4$, for otherwise it cannot be that $\Vert \theta - \theta^0\Vert = \veps$. If $\vert \theta_d - \theta_d^0\vert \geq \veps / 4$ the divergence in \eqref{eq:norm_KL_neg} has a lower bound away from zero by the same argument as for the cases $\vert \theta_d - \theta_d^0\vert \geq \zeta$. It remains to deal with the case $\Vert \beta_1 - \beta_1^0\Vert \geq \veps / 4$.

Writing \[[x_{i, i}^\tsp (\beta_1^0 - \beta_1)]^2 = (\beta_1^0 - \beta_1)^\tsp x_ix_i^\tsp (\beta_1^0 - \beta_1)\] we see that $-2 N^{-1} \Lambda_N(\theta; W^{(1)})$ is equal to
 \begin{align*}
     \log\left(\frac{1 + 2\theta_d}{1 + 2\theta_d^0}\right) +
     \frac{1 + 2\theta_d^0 + N^{-1} \sum_{i = 1}^N (\beta_1^0 - \beta_1)^\tsp x_ix_i^\tsp (\beta_1^0 - \beta_1)}{1 + 2\theta_d}  - 1,
 \end{align*}
 which has a lower limit that is greater than $$\log\left(\frac{1 + 2\theta_d}{1 + 2\theta_d^0}\right) + \frac{1 + 2\theta_d^0 + c_3(\veps / 4)^2}{1 + 2\theta_d}  - 1.$$
 This expression is in turn maximized in $\theta_d$ at $\theta_d = \theta_d^0 + c_3(\veps / 16)^2$; this follows from a straightforward optimization in $1 + 2\theta_d.$ The corresponding maximum evaluates to $\log(1 + 2\theta^0_d + c_3(\veps/4)^2) - \log(1 + 2\theta_d^0) > 0$. This finishes the proof of point 1.

 The proof of point 2 consists of checking the conditions of Lemma \ref{lem:pollard}. We first work with $A_1$ and $W^{(1)}$. Let $h_i(\omega, \theta) = \log[f_{\theta}(Y_{i, i, 1}(\omega)) / f_{\theta^0}(Y_{i, i, 1}(\omega))]$ be the log-likelihood ratio for the $i$th observation in the first subcollection, $i = 1, \dots, N$. We equip $\mathcal H_{N, \omega}$ with the $L_1$ norm $\Vert\cdot \Vert_1$, and $\Theta$ is equipped with the $L_2$ norm as before. To facilitate checking the two conditions we will first derive envelopes with the following properties: $\sup_{-\infty < i < \infty}\E H_i^k < \infty$ for every $k \geq 0$, $\sup_{-\infty < i < \infty}\pr(H_i \geq K) \to 0$ as $K \to 0$, and each $h_i(\omega, \theta)$ is $H_i$-Lipschitz in $\theta$ on $\bar{B}_\veps(\theta^0)$, and hence on $A_1$, for every $\omega$. We start with the Lipschitz property.

 Let us use the slight abuse of notation that $y_{i, i, 1} = Y_{i, i, 1}(\omega)$. Since the distribution of $W^{(1)}$ does not depend on $\beta_2$ we have $\nabla_{\beta_2}h_i(\omega, \theta) = 0$, and for some $c_1, c_2, c_3, c_4, c_5 > 0$ (depending on $\veps$), and every $\theta \in \bar{B}_\veps(\theta^0)$,
 \begin{align*}
  \Vert \nabla_{\beta_1}h_i(\omega, \theta) \Vert & = \Vert (y_{i, i, 1} - x_{i, i}^\tsp \beta_1)x_{i, i} / (1 + 2\theta_d)\Vert \leq c_1 \vert y_{i, i, 1}\vert +  c_2 \\
  \vert \nabla_{\theta_d}h_i(\omega, \theta)\vert & =\frac{1}{2}\left \vert \frac{1}{1 + 2\theta_d}  - (y_{i, i, 1} - x_{i, i}^\tsp \beta_1)^2/(1 + 2\theta_d)^2\right \vert \\
  & \leq c_3 + c_4 (\vert y_{i, i, 1}\vert + c_5)^2.
 \end{align*}
 Let $H_i$ be the sum of the bounds, i.e.
 \begin{align*}
  H_i(\omega) & = c_1 \vert y_{i, i, 1}\vert +  c_2  + c_3 + c_4 (\vert y_{i, i, 1}\vert + c_5)^2.
 \end{align*}
 By the mean value theorem, $\vert h_i(\omega, \theta) - h_i(\theta', \omega)\vert = \vert (\theta - \theta')^\tsp \nabla h_i(\omega, \tilde{\theta})\vert \leq  \Vert \theta - \theta'\Vert H_i$ for some $\tilde{\theta}$ between $\theta$ and $\theta'$. That is, $h_i$ is $H_i$-Lipschitz on $ \bar{B}_\veps(\theta^0)$. That $H_i$ is an envelope for $h_i$ follows from noting that $h_{i}(\omega, \theta^0) = 0$ so by taking $\theta' = \theta^0$ in the previous calculation, $\vert h_i(\omega, \theta)\vert \leq H_i \Vert \theta - \theta^0\Vert \leq H_i$ on $\bar{B}_\veps(\theta^0)$. That $\sup_{i} \E (H_i^k) < \infty$ for every $k > 0$ and $\sup_i \pr(H_i > K) \to 0$ as $K \to \infty$ follow from that $Y_{i, i, 1}$ is normally distributed with variance $1 + 2\theta_d^0$, not depending on $i$, and mean satisfying $-\Vert \beta_1^0\Vert \leq x_{i, i}^\tsp \beta_1^0 \leq \Vert \beta_1^0\Vert$. We are now ready to check the conditions of Lemma \ref{lem:pollard}.

 By the Cauchy--Schwartz inequality and the properties just derived, we have for every fixed $N$ that
 \begin{align*}
  N^{-1}\sum_{i = 1}^N \E [H_i I(H_i > K)] \leq \sup_{i}\E [H_i^2 ] \sup_{i}\pr(H_i \geq K) \to 0,\  K \to \infty,
 \end{align*}
 which verifies the first condition.

 For the second condition, note that the derived Lipschitz property gives, for arbitrary $h = (h_1(\omega, \theta), \dots, h_N(\omega, \theta))$ and $h' = (h_1(\omega, \theta'), \dots, h_N(\omega, \theta'))$ in $\mathcal H_{N, \omega}$:
 \begin{align*}
  \Vert h - h'\Vert_1 & = \sum_{i = 1}^N \vert h_{i}(\omega, \theta) - h_i(\omega, \theta')\vert \\
  & = \Vert \theta  - \theta'\Vert \Vert H\Vert_1.
 \end{align*}
 Thus, if we cover $\partial B_\veps(\theta^0)$ with $\epsilon$-balls with centers $\theta^j$, $j = 1, \dots, M$, then the corresponding $L_1$ balls in $\mathbb R^N$ of radius $\epsilon \Vert H\Vert_1$ with centers \[h^j = (h_1(\omega, \theta^j), \dots, h_N(\omega, \theta^j))\] cover $\mathcal{H}_{N, \omega}$. This is so because for every $\theta \in \partial B_\veps(\theta^0)$ there is a $\theta^j$ such that $\Vert \theta- \theta^j\Vert \leq \epsilon$, and hence by the Lipschitz property $\Vert h(\omega, \theta) - h(\omega, \theta^j)\Vert_1 \leq \Vert H\Vert_1 \epsilon$. Thus, $\mathrm{C}(\epsilon \Vert H\Vert_1, \mathcal H_{N, \omega}, \Vert \cdot \Vert_1) \leq \mathrm{C}(\epsilon, \partial B_\veps(\theta^0), \Vert \cdot \Vert)$. Since the covering number $\mathrm{C}(\epsilon, \partial B_\veps(\theta^0), \Vert \cdot \Vert)$ is constant in $N$, the second condition of Lemma \ref{lem:pollard} is verified for $A_1$ and $W^{(1)}$.

 The arguments for $A_2$ and $W^{(2)}$ are similar, redefining $h_i(\omega, \theta)$ with $Y_{i, i, 1}$ replaced by $Y_{1, 1, 2}$, taking $A_2$ in place of $A_1$, and so on. We need only prove the existence of envelopes $H_1, \dots, H_N$ with the desired properties. Using that $\vert y_{i, j, 2} - c'(\eta_{i, 2, 1})]\vert \leq 1$ and that $f_\theta(y_{i, i, 2}\mid u)f_\theta(u)/f_\theta(y_{i, i, 2}) = f_\theta(u\mid y_{i, i, 2})$ one gets,
 \begin{align*}
  \Vert \nabla_{\beta_2} h_i(\omega, \theta) \Vert & = \left \Vert\frac{1}{f_\theta(y_{i, i, 2})}\int f_\theta(y_{i, i, 2}\mid u)f_\theta(u)[y_{i, i, 2} - c'(\eta_{i, j, 2})]x_{i, i} \dd u\right \Vert \\
  & \leq \Vert x_{i, i}\Vert \leq 1.
 \end{align*}
 Using that $U^{(1)}_i$ and $U^{(2)}_j$ are the only random effects entering the linear predictor $\eta_{i, j, 2}$, and that $f_\theta(y_{i, j, 2}\mid u) \leq 1$,
 \begin{align*}
  \vert \nabla_{\theta_d} h_i(\omega, \theta)\vert & \leq \frac{1}{2 \theta_d f_\theta(y_{i, i, 2})} \int f_\theta(u^{(1)}_i, u^{(2)}_j)\left(\frac{(u^{(1)}_i)^2 + (u^{(2)}_j)^2}{\theta_d}\right) \dd u +\frac{1}{\theta_d}\\
  & = \frac{1}{\theta_d f_\theta(y_{i, j, 2})} + \frac{1}{\theta_d}.
 \end{align*}
 Due to continuity and compactness, the quantity in the last line attains its supremum on $\bar{B}_\veps(\theta^0)$. This maximum is finite for both $y_{i, i, 2} = 1$ and $y_{i, i, 2} = 0$ since the marginal success probability cannot be one or zero on interior points of $\Theta$.
 Thus, on $\bar{B}_\veps(\theta^0)$, $\Vert \nabla h_i(\omega, \theta) \Vert$ is bounded by a constant, say $H$, the largest needed for the two cases $y_{i, i, 2} = 0$ and $y_{i, i, 2} = 1$. By setting $H_i = H, i = 1, \dots, N$, we have envelopes with the right properties and this completes the proof of point 2.

Finally, we prove point 3. Consider without loss of generality the first subset and subcollection. For economical notation we omit dependence on the subcollection and write $L_N(\theta) = L_N(\theta; W^{(1)})$ and $\Lambda_N(\theta) = \Lambda_N(\theta; W^{(1)})$. Point 1 gives that $\sup_{\theta\in A_1}\E[\Lambda_N(\theta)] < -3\epsilon$ for some $\epsilon > 0$ and all large enough $N$. Assuming that $N$ is large enough that this holds, we get
 \begin{align*}
  \pr \left(e^{\epsilon N} \sup_{\theta \in A_1}L_N(\theta) > e^{-\epsilon N} \right)
  & \leq \pr\left(N^{-1}\sup_{\theta\in A_1}\Lambda_N(\theta) > \epsilon + \sup_{\theta \in A_1}\E[\Lambda_N(\theta)] \right) \\
  & \leq \pr\left(N^{-1}\sup_{\theta\in A_1}\left \vert \Lambda_N(\theta) - \E[\Lambda_N(\theta)]\right \vert > \epsilon \right),
 \end{align*}
 which vanishes as $N \to \infty$ by point 2.
\end{proof}

\end{document}